\newcommand{\Z}{{\mathbb{Z}}}
\newcommand{\NN}{{\mathbb{N}}}
\newcommand{\vol}{{\rm vol}}
\newcommand{\nc}{\newcommand}
\nc{\dmo}{\DeclareMathOperator}
\nc{\nt}{\newtheorem}
\newtheorem{thm}{{\bf Theorem}}
\newtheorem{lem}[thm]{{\bf Lemma}}
\newtheorem{cor}[thm]{{\bf Corollary}}
\newtheorem{rem}[thm]{Remark}
\newtheorem{ques}[thm]{Question}
\numberwithin{equation}{section}
\title[Volumes of fibered $2$-fold branched covers of $3$-manifolds]
{Volumes of  fibered $2$-fold branched covers of $3$-manifolds}
\date{\today}
\thanks{Hirose's research was partially supported by JSPS KAKENHI Grant Numbers JP16K05156 and JP20K03618. 
Kalfagianni's research was  partially
supported by NSF grants DMS-1708249 and DMS-2004155. 
Kin's research was partially supported by JSPS KAKENHI  Grant Numbers JP21K03247.}
\author{Susumu Hirose}
\address{%
Department of Mathematics,  
Faculty of Science and Technology, 
Tokyo University of Science, 
Noda, Chiba, 278-8510, Japan}
\email{%
hirose\_susumu@ma.noda.tus.ac.jp
}
\author{Efstratia  Kalfagianni}
\address{%
		Department of Mathematics, Michigan State University, 619 Red Cedar Road,
East Lansing, MI 48824, USA
}
\email{%
        kalfagia@msu.edu
}
\author{Eiko Kin}
\address{%
        Center for Education in Liberal Arts and Sciences, Osaka University, Toyonaka, Osaka 560-0043, Japan
}
\email{%
  kin.eiko.celas@osaka-u.ac.jp      
        }
\begin{document} 
\begin{abstract} 
We prove that for any closed, connected, oriented  $3$-manifold $M$, 
there exists an infinite family of $2$-fold branched covers of $M$ 
that are hyperbolic $3$-manifolds and surface bundles over the circle 
with arbitrarily large volume.
\end{abstract}

\maketitle

\section{Introduction}
\label{section_intro}

Sakuma \cite{sakuma} proved that every closed, connected, oriented $3$-manifold $M$ 
with a Heegaard splitting of genus $g$ admits a $2$-fold branched cover of $M$ 
that is a genus $g$ surface bundle over the circle $S^1$. 
See also Koda-Sakuma \cite[Theorem 9.1]{KS22}. 
Brooks  \cite{brooks} showed that the $2$-fold branched cover of $M$ in Sakuma's theorem can be chosen to be hyperbolic  
if $g \ge  \max(2, g(M))$, 
where $g(M)$ is the Heegaard genus of $M$.

Montesinos gave different proofs of results by 
Sakuma and Brooks by using  open book decompositions of $M$. 
To state his theorem, 
let $\Sigma = \Sigma_{g,m}$ be a compact, connected, oriented surface of genus $g$ 
with $m$ boundary components, 
and let $\Sigma_g= \Sigma_ {g,0}$. 
 The mapping class group $\mathrm{MCG}(\Sigma)$  is 
the group of isotopy classes of orientation-preserving 
self-homeomorphisms on $\Sigma$. 
  By the Nielsen-Thurston classification, 
elements  in $\mathrm{MCG}(\Sigma)$ fall into three types: 
periodic, reducible, pseudo-Anosov \cite{thurston:mappingtori}. 
For $f \in \mathrm{MCG}(\Sigma)$, we consider the mapping torus 
 $$T_{f}=\Sigma \times [-1,1]/_{(x,1)\sim ({ {f}(x),-1)}}.$$ 
 We call $\Sigma$ the fiber of $T_f$.  
 The $3$-manifold $T_f$ is a $\Sigma$-bundle over the circle $S^1$ with the monodromy $f$. 
 It is known by Thurston \cite{Thurston98} that 
 $T_f$ admits a hyperbolic structure of finite volume if and only if 
 $f$ is pseudo-Anosov. 
 The following result is a starting point of our paper.

\begin{thm}[Montesinos \cite{Mon1}]
\label{thm_Montesinos}
Let $M$ be a closed, connected, oriented $3$-manifold 
containing a hyperbolic fibered knot of genus $g_0 \ge 1$. 
Then there exists a 
$2$-fold branched cover of $M$ branched over  a $2$-component link 
that  is a hyperbolic $3$-manifold and a $\Sigma_{2g_0}$-bundle over $S^1$. 
\end{thm}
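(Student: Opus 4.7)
The plan is to leverage the open book decomposition of $M$ induced by $K$. Since $K$ is hyperbolic and fibered of genus $g_0 \geq 1$, Thurston's theorem gives that $M \setminus K$ fibers over $S^1$ with fiber $F = \Sigma_{g_0, 1}$ (so $\bdy F = K$) and pseudo-Anosov monodromy $\phi$. Hence $M = T_\phi \cup (K \times D^2)$ is an open book. My first step is to locate two distinct interior fixed points $p_1, p_2$ of $\phi$; these produce two disjoint section knots $L_i = \{p_i\} \times S^1 \subset T_\phi \subset M$, and $L = L_1 \cup L_2$ is a $2$-component link disjoint from $K$.

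I would then form the double branched cover $\tilde M \to M$ along $L$, specified by the homomorphism $\pi_1(M \setminus L) \to \Z/2$ sending meridians of $L_1, L_2$ to $1$ and the meridian of $K$ to $0$. Well-definedness follows from $\phi$-invariance together with the relation $\alpha \equiv \beta_1 + \beta_2 \equiv 0 \pmod 2$ in $H_1(F \setminus \{p_1, p_2\}; \Z/2)$, where $\alpha = \bdy F$ and $\beta_i$ is a meridian around $p_i$. The fibration $T_\phi \setminus L \to S^1$ lifts to a fibration $\tilde M \setminus \tilde K \to S^1$ whose fiber $\tilde F$ is the double cover of $F$ branched at $\{p_1, p_2\}$; Riemann-Hurwitz gives $\chi(\tilde F) = -4g_0$, and since $\alpha \mapsto 0$ the boundary of $F$ lifts as two circles, so $\tilde F = \Sigma_{2g_0, 2}$. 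The preimage $\tilde K$ of $K$ consists of two unramified copies, and since the pseudo-Anosov representative of $\phi$ is pointwise identity on $\bdy F$, the lift $\tilde \phi$ fixes $\bdy \tilde F$ pointwise. Capping off the two boundary circles of $\tilde F$ by disks extends $\tilde \phi$ to $\hat \phi : \Sigma_{2g_0} \to \Sigma_{2g_0}$ and identifies $\tilde M$ with $T_{\hat \phi}$, a $\Sigma_{2g_0}$-bundle over $S^1$.

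Hyperbolicity then follows from Thurston's theorem for fibered $3$-manifolds, provided $\hat \phi$ is pseudo-Anosov: the stable and unstable measured foliations of $\phi$ lift to invariant singular foliations on $\tilde F$ with the same dilatation $\lambda(\phi) > 1$ and extend across the two capping disks to invariant foliations on $\Sigma_{2g_0}$, certifying $\hat \phi$ pseudo-Anosov. The main obstacle is the first step: ensuring $\phi$ has two interior fixed points. The Lefschetz number $L(\phi) = 1 - \mathrm{tr}(\phi_* | H_1(F; \QQ))$ together with Nielsen-Thurston theory yields the desired points when $|L(\phi)| \geq 2$; in the remaining low-index cases one must either work with periodic orbits of a power $\phi^n$ (and verify that the refined Riemann-Hurwitz count still yields fiber $\Sigma_{2g_0}$) or alter the branch link to include the binding $K$. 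Unifying these variants is the core technical content of Montesinos's argument.
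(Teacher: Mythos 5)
Your approach is genuinely different from Montesinos's, and from the construction this paper builds on.  You propose to pass to the double cover of the fiber $F = \Sigma_{g_0,1}$ branched at two interior fixed points of the pseudo-Anosov monodromy $\phi$, so that the lifted monodromy $\hat\phi$ is automatically pseudo-Anosov and no further modification is needed.  Montesinos instead takes the \emph{unbranched} double $DF_{g_0,1} = \Sigma_{2g_0}$ of the fiber along its boundary, sets the monodromy to be $\widehat h = h \# h^{-1}$, and realizes $T_{\widehat h}$ as a $2$-fold branched cover of $M$ via the involution $u(x,t)=(\iota(x),-t)$ that swaps the two copies of $F$; the branch link consists of the two circles $\partial F\times\{0\}$ and $\partial F\times\{1\}$ fixed by $u$.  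This construction requires no dynamical hypothesis on $\phi$ whatsoever, but the monodromy $\widehat h$ is manifestly reducible (it preserves $\partial F\subset\Sigma_{2g_0}$), so one must compose with Dehn twists (and invoke Montesinos's trick to show the branched-cover structure survives the surgery) to obtain a hyperbolic mapping torus.  In short: your construction buys pseudo-Anosovicity ``for free'' at the cost of an unverified existence assumption, whereas Montesinos's is unconditional at the cost of requiring the twist correction.

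The gap in your argument is exactly the one you flag, and it is not a small one.  A pseudo-Anosov $\phi$ on $\Sigma_{g_0,1}$ has no guarantee of two interior fixed points; the proposed fallbacks do not repair this.  Branching over a $2$-periodic orbit of $\phi$ produces a \emph{connected} branch curve in $T_\phi$ (the orbit closes up to a single circle double-covering the base), so you do not get a $2$-component link and the Riemann--Hurwitz count changes.  Branching instead over the binding $K$ and a single fixed-point section knot forces $\alpha = \partial F$ to map to $1$ in $\Z/2$ (by the relation $\alpha \equiv \beta_1 \pmod 2$), so $\partial F$ lifts connectedly, $K$ is itself a branch component, and the whole fiber/genus bookkeeping must be redone; this is no longer the construction you described.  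Two further points deserve care: the pseudo-Anosov representative in the monodromy class does \emph{not} fix $\partial F$ pointwise (it rotates the boundary prongs), so the claim that $\tilde\phi$ fixes $\partial\tilde F$ pointwise and hence caps off cleanly needs a genuine argument; and the existence of the $\Z/2$-homomorphism on $\pi_1(M\setminus L)$ with the prescribed meridian values must be checked in $H_1$, which for general $M$ (not a $\Z/2$-homology sphere) is not automatic and should be derived from the fibration structure rather than asserted.  Montesinos's doubling construction avoids all of these issues at once, which is why the paper adopts it.
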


In this paper,  building on the approach of Montesinos, we prove the following result. 
Here $\mathrm{vol}(W)$ denotes the volume of a hyperbolic $3$-manifold $W$.

\begin{thm}
\label{thm_main}
Let $M$ be a closed, connected, oriented $3$-manifold 
containing a hyperbolic fibered knot of genus $g_0 \ge 2$. 
Then for any $g \ge g_0$ and $j \in \{1,2\}$,  
there exists an infinite family $\{N_{\ell}\}_{\ell \in \NN}$ of hyperbolic $3$-manifolds 
such that 
\begin{itemize}
\item [(a)]
$N_{\ell}$ is a $\Sigma_{2g+ j-1}$-surface bundle over $S^1$, 

\item[(b)]
$N_{\ell}$ is a $2$-fold branched cover of $M$ branched over a  $2j$-component  link, and 

\item  [(c)] 
the inequalities 
$$\frac{1}{2}\ g < \mathrm{vol}(N_{\ell})< \mathrm{vol}(N_{\ell+1})\ \hspace{2mm} \mbox{for} \hspace{2mm} \ell \in {\Bbb N}$$
hold. 
\end{itemize}
\end{thm}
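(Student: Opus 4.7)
I would build directly on the Montesinos construction from Theorem A, extending it to handle arbitrary $(g,j)$ and to produce an infinite family with growing volumes. The argument has three stages.

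\emph{Stage 1: fibered surfaces of the desired topology in $M$.} Starting from the given hyperbolic fibered knot of genus $g_0$ with pseudo-Anosov monodromy $\phi$, I would apply Stallings--Gabai Murasugi sums (Hopf plumbings) along carefully chosen arcs to obtain fibered surfaces in $M$ of any prescribed genus $g\ge g_0$, with $1$ boundary component (for $j=1$) or $2$ boundary components (for $j=2$). Since Hopf plumbing amounts to Murasugi summing with a fibered surface in $S^3$, the ambient $3$-manifold is unchanged; the new monodromy is $\phi$ post-composed with Dehn twists along plumbing cores, and Penner's construction ensures that it can be kept pseudo-Anosov.

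\emph{Stage 2: Montesinos on the new fibered surface.} Applying Montesinos' branching construction to the new fibered surface produces an involution $\iota$ on a closed surface $\widehat F = \Sigma_{2g+j-1}$ (built from the doubled page), together with an $\iota$-equivariant pseudo-Anosov $\Phi$ on $\widehat F$ lifting the downstairs monodromy. The mapping torus $N:=T_\Phi$ is a closed $\Sigma_{2g+j-1}$-bundle over $S^1$, and by a direct Euler-characteristic and cover-degree analysis $N$ is a $2$-fold branched cover of $M$ over a link with exactly $2j$ components. This supplies a single $N$ satisfying (a) and (b).

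\emph{Stage 3: infinite family with growing volume.} For the family, I would produce an infinite sequence of $\iota$-equivariant pseudo-Anosov monodromies $\Phi_\ell$ on $\widehat F$, all descending to monodromies compatible with the same open book of $M$, by post-composing $\Phi$ with well-chosen $\iota$-equivariant products of Dehn twists along curves whose descents to the quotient orbifold are isotopically trivial (such curves exist thanks to the $2j$ branch points in the quotient, e.g.\ as lifts of boundary curves of disks containing an odd number of branch points). Choosing the twist curves in a Penner-type filling configuration with the invariant foliations of $\Phi$ keeps each $\Phi_\ell$ pseudo-Anosov with dilatation $\lambda(\Phi_\ell)\to\infty$, so each $N_\ell:=T_{\Phi_\ell}$ is hyperbolic and a $2$-fold branched cover of $M$ over a $2j$-component link of the same topological type. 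The strict monotonicity and divergence of $\vol(N_\ell)$ would follow from Brock's estimate $\vol(T_{\Phi_\ell})\asymp \|\Phi_\ell\|_{\mathrm{WP}}$ combined with J\o{}rgensen--Thurston finiteness, while the lower bound $\vol(N_\ell)>g/2$ follows from a standard lower bound of hyperbolic volume in terms of the fiber's Euler characteristic.

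\emph{Main obstacle.} The principal challenge is Stage 3: constructing genuinely distinct $\iota$-equivariant pseudo-Anosov elements of $\mathrm{MCG}(\widehat F)$ whose descents to the quotient orbifold preserve $M$ (i.e.\ project to isotopically trivial elements after accounting for the open book structure), while simultaneously forcing pseudo-Anosov behaviour, divergence of dilatations, and pairwise non-isometric $N_\ell$. This amounts to a braid-group-type analysis keyed to the fixed-point structure of $\iota$ and the interplay between $\iota$-equivariant essential curves upstairs and trivial elements downstairs. Verifying all these compatibility conditions at once, together with the strict volume inequalities in (c), is the delicate heart of the proof.
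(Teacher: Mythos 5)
Your Stages 1 and 2 track the paper closely: Stage 1 is exactly the Colin--Honda stabilization of open books (the paper's Theorem~\ref{thm:colin-honda}), and Stage 2 is the Montesinos doubling that the paper also uses to realize a $\Sigma_{2g+j-1}$-bundle as a $2$-fold branched cover of $M$. The divergence, and the gaps, are all in Stage~3.

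First, your mechanism for generating the infinite family and preserving the branched cover structure is described in a way that does not quite match what is actually true. The twist curves one needs are $\iota$-invariant curves that descend to \emph{arcs} in the quotient page (because $\iota$ has two fixed points on each such curve, lying on $\partial F_{g,j}$), not to ``isotopically trivial'' curves or boundaries of disks containing branch points. The reason the quotient $3$-manifold stays $M$ is the Montesinos trick: Dehn surgery on an $\iota$-invariant curve in the branched cover corresponds downstairs to a rational tangle replacement inside a $3$-ball meeting the branch set in two points, which changes the branch link but not the ambient manifold. This is the content of the paper's Claim~4; your version of the criterion would not give the right setup.

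Second, and more seriously, the quantitative claims in Stage~3 would not follow from the tools you invoke. Brock's estimate $\vol(T_{\Phi_\ell}) \asymp \|\Phi_\ell\|_{\mathrm{WP}}$ is a coarse bi-Lipschitz comparison with genus-dependent constants; it cannot produce the \emph{strict} inequalities $\vol(N_\ell) < \vol(N_{\ell+1})$, only eventual growth up to bounded error. Likewise, the lower bound $\vol(N_\ell) > g/2$ cannot come from ``a standard lower bound of hyperbolic volume in terms of the fiber's Euler characteristic'': no such bound exists, since for every $g$ there are pseudo-Anosov maps on $\Sigma_g$ with entropy of order $1/g$, and by Kojima--McShane their mapping tori have uniformly bounded volume. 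The paper obtains the bound by a completely different mechanism: it chooses $k=g$ disjoint curves on the page whose orbits under the monodromy are distinct and fill (Lemma~\ref{lem_curve}), drills out the corresponding curves in the doubled mapping torus to get a hyperbolic $3$-manifold with at least $g$ cusps (so volume $> g\,v_3$ by Adams), and then applies the effective Dehn-filling volume estimate of Futer--Kalfagianni--Purcell to lose at most a factor of $1/2$. That same effective estimate, applied inductively with ever larger twist powers, gives the strict monotonicity. You would need to replace the Brock/J\o{}rgensen--Thurston appeal with something quantitative of this kind to make Stage~3 go through.

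Finally, you appeal to Penner's construction plus equivariance to get pseudo-Anosovness of $\Phi_\ell$, whereas the paper proves a Fathi/Long--Morton-type theorem (Theorem~\ref{hyperbolic}) showing that high powers of twists along a curve system with distinct filling orbits compose with a given mapping class to give a pseudo-Anosov, \emph{and} simultaneously gives the Dehn-filling picture needed for the volume control. Running Penner's criterion in an $\iota$-equivariant setting while also ensuring the twist curves' orbits fill is certainly not automatic and would require its own argument; but even if that were done, it would not by itself produce the volume estimates required for (c).
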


By  Soma \cite{Soma84}, 
every closed  oriented, connected $3$-manifold $M$ contains a hyperbolic fibered knot of genus $g_0$ for some $g_0 \ge 1$. 
Equivalently 
there exists an open book decomposition $(\Sigma_{g_0,1}, h)$ of $M$, 
where the monodromy $h$ is isotopic to a pseudo-Anosov homeomorphism. 
By stabilizing open book decompositions along suitable arcs,  
one may assume that $M$ contains a hyperbolic fibered knot of genus $g$ for some $g \ge 2$, 
see Colin-Honda \cite{ColinHonda}, Detcherry-Kalfagianni \cite{DK:cosets} for example. 
Hence Theorem \ref{thm_main} applies to all  $3$-manifolds $M$.

Let $\mathcal{D}_g(M)$ be the subset of $\mathrm{MCG}(\Sigma_g)$ 
on the closed surface of genus $g$ 
consisting of elements $f$ such that 
its mapping torus $T_{f}$ is homeomorphic to a $2$-fold branched cover of $M$ branched over a link. 
The above result by Sakuma tells us that 
$\mathcal{D}_g(M) \ne \emptyset$ if $g \ge g(M)$, and 
there exist infinitely many pseudo-Anosov elements in $\mathcal{D}_g(M)$ if $g \ge \max(2, g(M))$, see \cite{brooks}. 
For a study of stretch factors of pseudo-Anosov elements of $\mathcal{D}_g(M)$, see \cite{HiroseKin20}. 
As an immediate corollary of Theorem \ref{thm_main}, we have following.

\begin{cor} 
Let $M$ be a closed, connected, oriented $3$-manifold 
containing a hyperbolic fibered knot of genus $g_0 \ge 2$.  
Then there exists an infinite family $\{\phi_g\}_{g=1}^{\infty}$ of pseudo-Anosov elements 
$\phi_g \in \mathcal{D}_{2g_0+ g-1}(M)$ 
such that the volume  $\mathrm{vol}(T_{\phi_g})$ of the mapping torus of $\phi_g$ 
goes to $\infty$ as $g \to \infty$. 
\end{cor}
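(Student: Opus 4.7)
The plan is to derive the corollary as a direct specialization of Theorem~\ref{thm_main}, with the only work being to match parameters so that the fiber genus $2g_0 + g - 1$ is realized for every $g \in \mathbb{N}$. Every $g \ge 1$ can be written uniquely as $g = 2(g' - g_0) + j$ with $g' \ge g_0$ and $j \in \{1,2\}$; explicitly, $g' = g_0 + \lceil g/2 \rceil - 1$ and $j \in \{1,2\}$ according to the parity of $g$. With these choices, $2g' + j - 1 = 2g_0 + g - 1$, so the fiber $\Sigma_{2g'+j-1}$ appearing in Theorem~\ref{thm_main} matches the fiber $\Sigma_{2g_0+g-1}$ required by the corollary.

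Next, I would invoke Theorem~\ref{thm_main} with the parameters $(g', j)$ chosen above. This produces an infinite family $\{N_\ell\}_{\ell \in \mathbb{N}}$ of hyperbolic $3$-manifolds, each a $2$-fold branched cover of $M$ and a $\Sigma_{2g_0+g-1}$-bundle over $S^1$. Picking, say, $N_1$ from this family, Thurston's theorem on fibered $3$-manifolds (quoted in the excerpt) tells us that the monodromy $\phi_g \in \mathrm{MCG}(\Sigma_{2g_0+g-1})$ must be pseudo-Anosov, since $T_{\phi_g} = N_1$ is hyperbolic. By construction $T_{\phi_g}$ is a $2$-fold branched cover of $M$ branched over a link, so $\phi_g \in \mathcal{D}_{2g_0+g-1}(M)$ as required.

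Finally, the volume bound from Theorem~\ref{thm_main}(c) gives $\mathrm{vol}(T_{\phi_g}) > g'/2 = (g_0 + \lceil g/2 \rceil - 1)/2$. Since $g' \to \infty$ as $g \to \infty$, this immediately yields $\mathrm{vol}(T_{\phi_g}) \to \infty$. There is no real obstacle: the whole content of the corollary is packaged inside Theorem~\ref{thm_main}, and the proof amounts to re-indexing the parameter $(g', j)$ of the theorem as a single parameter $g$ and translating ``hyperbolic fibered $3$-manifold'' into ``pseudo-Anosov mapping class'' via Thurston.
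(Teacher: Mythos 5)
Your proposal is correct and is essentially the intended argument: the paper labels this an ``immediate corollary'' of Theorem~\ref{thm_main}, and the content you supply --- the reparametrization $g \mapsto (g',j)$ with $g' = g_0 + \lceil g/2\rceil - 1$ and $j \in \{1,2\}$ so that $2g'+j-1 = 2g_0+g-1$, followed by an application of Theorem~\ref{thm_main} and the lower bound $\vol(T_{\phi_g}) > g'/2 \to \infty$ --- is exactly the straightforward specialization that is meant.
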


We ask the following question. 

\begin{ques}
\label{question}

For $g$ sufficiently large, 
does the set $\mathcal{D}_g(M)$ contain an infinite family of pseudo-Anosov elements whose mapping tori have arbitrarily large volume? 
\end{ques}

Note that Futer, Purcell and Schleimer \cite{FPS22} give a positive answer to Question \ref{question} when $M= S^2 \times S^1$. 
Using the results of \cite{FPS22}, in Corollary \ref{positive}, we also obtain  a positive answer to Question \ref{question} when $M= S^3$ and $g$ is even.

\section{Fathi's theorem and volume variation}

This section is devoted to the proof of  a result 
which is a generalization of a theorem by Fathi \cite{Fathi}.  
Given a surface $\Sigma = \Sigma_{g,m}$ 
of genus $g$ with $m$ boundary components,  
let $\mathrm{MCG}(\Sigma)$ be the group of isotopy classes of orientation preserving self-homeomorphisms of $\Sigma$. 
In this section, we do not require that the maps and isotopies fix the boundary $\partial \Sigma$ of $\Sigma$ pointwise. 
In Sections \ref{section_curve-openbook} and \ref{section_proof-of-theorem}, 
we restrict our attention to the open book decompositions $(\Sigma, h)$ of  closed $3$-manifolds, 
where the monodromy $h: \Sigma \rightarrow \Sigma$ preserves $\partial \Sigma$ pointwise. 
By abuse of notations, we denote a representative of a mapping class $f \in \mathrm{MCG}(\Sigma)$ by the same notation $f$.

A simple closed curve $\gamma$ in $\Sigma$ is  \emph{essential} 
if it is not homotopic to a point or a boundary component. 
For simplicity, 
we may not distinguish between a simple closed curve $\gamma$ and its isotopy class $[\gamma]$. 
Let $\tau_{\gamma}$  denote the positive (i.e. right-handed)  Dehn twist about $\gamma$.

Let  $\gamma_1,\dots, \gamma_k$ be essential simple closed curves in $\Sigma$. 
We say that 
the set $\{\gamma_1,\dots, \gamma_k\}$  \emph{fills} $\Sigma$ if 
for each essential simple closed curve $\gamma'$ in $\Sigma$, 
there exists some $j \in \{1, \dots, k\}$ such that 
$i_{\Sigma}(\gamma', \gamma_j)>0$, 
where $i_{\Sigma}(\cdot,  \cdot)$ is the geometric intersection number on $\Sigma$. 
In this case, we also say that 
$\gamma_1,\dots, \gamma_k$  \emph{fill} $\Sigma$.

Given $f \in \mathrm{MCG}(\Sigma)$, 
we call $O_f(\gamma) =  \{f^{\ell}(\gamma) \ |\ \ell \in {\Bbb Z}\}$  the  \emph{orbit  of} $\gamma$ \emph{under} $f$. 
Strictly speaking, this is the set of isotopy classes $[f^{\ell}(\gamma)]$ of simple closed curves $f^{\ell}(\gamma)$. 
 We say that  orbits  of  $\gamma_1, \dots, \gamma_k$ under $f$ are \emph{distinct} 
if $O_f(\gamma_i) \ne O_f(\gamma_j)$  
for any $i,j \in \{1, \dots, k\}$ with $i \ne j$. 
Notice that 
$O_f(\gamma_i) =  O_f(\gamma_j)$ if and only if 
there exists  an integer $\ell \in {\Bbb Z}$ such that $f^{\ell}(\gamma_i) = \gamma_j$. 
We say that  the orbits of  $\gamma_1, \dots, \gamma_k$ under $f$ \emph{fill} $\Sigma$ if 
there exists an integer $n >0$ such that 
the set $\{f^{\ell}(\gamma_j)\ |\ j \in \{1, \dots, k\},\  \ell \in \{0, \pm 1, \dots, \pm n\}   \}$ fills $\Sigma$.

Suppose that  $\partial \Sigma \neq \emptyset$. 
A properly embedded arc $\alpha$ in $\Sigma$ is  \emph{essential} 
if it is not parallel to $\partial \Sigma$. 
As in the case of simple closed curves, 
we do not distinguish between an arc  $\alpha$ and its isotopy class $[\alpha]$. 
We allow that endpoints of the arcs are free to move around $\partial \Sigma$, 
and an arc $\alpha'$ that is isotopic to $\alpha$ may have the different endpoints from the ones of $\alpha$. 
Given $f \in \mathrm{MCG}(\Sigma)$, 
we  call $O_f(\alpha)=   \{f^{\ell}(\alpha) \ |\ \ell \in {\Bbb Z}\}$ the \emph{orbit of} $\alpha$ \emph{under} $f$.

Let $\alpha_1,\dots, \alpha_k$ be essential arcs. 
We say that  the orbits  of  $\alpha_1,\dots, \alpha_k$ under $f$ are   \emph{distinct} 
if $O_f(\alpha_i) \ne O_f(\alpha_j)$  
for any $i,j \in \{1, \dots, k\}$ with $i \ne j$.

 \begin{thm} 
 \label{hyperbolic}  
 Let $\gamma_1,\dots, \gamma_k$ be essential simple closed curves in $\Sigma = \Sigma_ {g,m}$, 
 where $k \ge 1$ and $3g-3+m>0$ (possibly $m=0$). 
 For any mapping class $f\in \mathrm{MCG}(\Sigma)$, 
 suppose that the orbits of $\gamma_1,\dots, \gamma_k$ under $f$ are distinct and fill $\Sigma$. 
 (i.e. $O_f(\gamma_i) \ne O_f(\gamma_j)$ for any $i,j \in \{1, \dots, k\}$ with $i \ne j$, and 
 the orbits of $\gamma_1,\dots, \gamma_k$ under $f$ fill $\Sigma$.)
 Then there exists $n \in {\Bbb N}$ which satisfies the following.
\begin{itemize}
\item [(a)] 
For any ${\bm n} =(n_1, \dots, n_k)\in {\mathbb Z}^k$  with $|n_i| \ge n$ for $i= 1, \dots, k$, 
the mapping class  
$$f_{\bm n} =\tau^{n_k}_{\gamma_k}  \dots  \tau^{n_1}_{\gamma_1} f \in \mathrm{MCG}(\Sigma)$$ 
is pseudo-Anosov.

\item [(b)] 
There exists a sequence  $\{{\bm n}_{\ell} \}_{\ell \in \NN} $ of the $k$-tuple of integers  
${\bm n}_{\ell} = (n_{\ell_1}, \dots, n_{\ell_k}) \in  {\mathbb Z}^k$ 
with $|n_{\ell_i}| \ge n$ for $i= 1, \dots, k$ 
such  that the mapping tori 
 $T_{f_{{\bm n}_{\ell}}}$ of $f_{{\bm n}_{\ell}}=\tau^{n_{\ell_k}}_{\gamma_k}  \dots  \tau^{n_{\ell_1}}_{\gamma_1} f $
are hyperbolic $3$-manifolds 
with  strictly increasing volumes: 
$$\frac{1}{2}\ k <  \vol(T_{f_{{\bm n}_{\ell}}})< \vol(T_{f_{{\bm n}_{\ell+1}}}) \hspace{2mm}\  \mbox{for} \hspace{2mm} \ 
\ell \in {\Bbb N}.$$
 \end{itemize}
   \end{thm}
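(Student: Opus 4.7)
The plan is to realize each mapping torus $T_{f_{\bm n}}$ as a Dehn filling of a common cusped parent $3$-manifold, and then invoke Thurston's hyperbolic Dehn surgery theorem together with J\o{}rgensen--Thurston's monotonicity of volume under filling. First I place the curves $\gamma_1,\dots,\gamma_k$ in $k$ pairwise distinct fibers of $T_f$ at times $t_1 < \cdots < t_k$, obtaining a disjoint link $L = L_1 \sqcup \cdots \sqcup L_k \subset T_f$ whose components are pairwise non-parallel (distinct $f$-orbits force the $\gamma_i$ to be pairwise non-isotopic in $\Sigma$). Set $N := T_f \setminus L$, and frame each boundary torus with meridian $\mu_i$ and longitude $\lambda_i$ given by a parallel copy of $\gamma_i$ on a nearby fiber. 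The standard fiber-Dehn-surgery calculation then identifies $T_{f_{\bm n}}$ with the Dehn filling of $N$ along the slopes $-1/n_i$, $i = 1,\dots,k$.

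The technical heart of the argument is to prove that $N$ is hyperbolic. I would apply Thurston's hyperbolization theorem by verifying irreducibility, atoroidality, and absence of a Seifert fibration. Irreducibility follows from $\Sigma$ being hyperbolic ($3g-3+m > 0$). Atoroidality is where the hypotheses enter essentially: any essential torus $T$ in $N$ intersects some fiber of $T_f$ in essential simple closed curves, and an innermost-disc / intersection-minimization argument within the fibration forces these curves to be disjoint in $\Sigma$ from every $f^{\ell}(\gamma_j)$, contradicting that the orbits fill. Essential annuli between distinct cusps $L_i$ and $L_j$ are similarly excluded, using the distinct-orbit hypothesis: such an annulus would produce some $\ell$ with $f^{\ell}(\gamma_i) = \gamma_j$, identifying two supposedly distinct orbits. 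Ruling out Seifert fibrations follows by a parallel argument on the base orbifold. I expect this atoroidality step to be the main obstacle.

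With $N$ hyperbolic, part (a) is immediate from Thurston's hyperbolic Dehn surgery theorem, which produces an integer $n \in \NN$ such that for every $\bm n \in \Z^k$ with $|n_i| \geq n$ the filled manifold $T_{f_{\bm n}}$ is hyperbolic; as a hyperbolic surface bundle, its monodromy $f_{\bm n}$ is then pseudo-Anosov. For part (b), J\o{}rgensen--Thurston monotonicity gives $\vol(T_{f_{\bm n}}) < \vol(N)$ and $\vol(T_{f_{\bm n}}) \to \vol(N)$ as $\min_i |n_i| \to \infty$, so I inductively select a sequence $\{\bm n_{\ell}\}$ with $|n_{\ell_i}| \geq n$ such that $\vol(T_{f_{\bm n_{\ell}}}) < \vol(T_{f_{\bm n_{\ell+1}}})$ and both terms approach $\vol(N)$. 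The lower bound $\vol(T_{f_{\bm n_{\ell}}}) > k/2$ rests on the fact that $\vol(N) > k/2$, which follows from a standard linear-in-cusps lower bound for volumes of multi-cusped hyperbolic $3$-manifolds; one then simply chooses each $\bm n_{\ell}$ so that $\vol(T_{f_{\bm n_{\ell}}})$ lies within $(\vol(N) - k/2)/2$ of $\vol(N)$.
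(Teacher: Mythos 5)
Your high-level strategy is exactly the paper's: drill out the curves $\delta_i = \gamma_i\times\{t_i\}$ placed at distinct fiber levels of $T_f$ to obtain a link complement $N$, show $N$ is hyperbolic via Thurston's hyperbolization, realize each $T_{f_{\bm n}}$ as an integral Dehn filling of $N$, invoke Thurston's hyperbolic Dehn surgery theorem for part (a), and use volume behavior under filling plus a cusp-count lower bound for part (b). Two details diverge from the paper. For (b), you use the qualitative J\o{}rgensen--Thurston fact that fillings have volume $<\vol(N)$ converging to $\vol(N)$, combined with Adams' bound $\vol(N) > k\,v_3 > k/2$; the paper instead deploys the effective filling estimate of Futer--Kalfagianni--Purcell, which makes the lower bound $\tfrac12 k < \vol(N_{\bm n})$ explicit via the slope-length inequality $\bigl(1-(2\pi/\lambda)^2\bigr)^{3/2}\vol(N) \le \vol(N_{\bm n})$. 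Both routes are valid, and indeed the paper remarks that the effective version is used only ``to make things more concrete''; yours is the softer argument. The other divergence is in the hyperbolicity of $N$: your atoroidality sketch (essential torus must cross fibers, orbits filling yields a contradiction) captures the paper's case analysis with vertical and horizontal annuli in $W_\Sigma$, but your treatment of essential annuli and Seifert pieces is not quite how the paper proceeds, and as stated is incomplete. You claim an essential annulus between cusps forces $f^\ell(\gamma_i)=\gamma_j$, but this does not cover annuli whose boundary is not on the drilled-out tori, and the implication itself needs a careful fiber-intersection argument. The paper handles this more cleanly: an essential annulus would make $N$ Seifert fibered (Hatcher), and then a copy of the fiber $\Sigma$ of $T_f$ disjoint from the boundary tori $T_1,\dots,T_k$ is an essential non-torus, non-annulus surface that must be horizontal in the Seifert fibration, yet cannot be because it misses boundary components. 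You correctly flag this step as the technical heart; just note that the orbit hypothesis alone is not the mechanism used to exclude annuli.
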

    
In the case of $\partial \Sigma= \emptyset$, 
Theorem \ref{hyperbolic}(a) is due to Fathi \cite[Theorem 0.2]{Fathi}. 
The result by Fathi is a generalization of a theorem by Long-Morton \cite{LongMorton}. 
The argument we give below follows the line of the proof in \cite{LongMorton}. 
In our setting, the  mapping tori of the pseudo-Anosovs obtained in Theorem \ref{hyperbolic}(a) 
 are given by the Dehn filling along hyperbolic $3$-manifolds. 
 This allows us to use results on volume variation under the Dehn filling to prove  Theorem \ref{hyperbolic}(b).

\begin{proof}[Proof of Theorem \ref{hyperbolic}]
We take numbers  $0<t_1 < t_2< \dots  <t_k <1$. 
 Let  $\delta_i =\gamma_i \times \{ t_i \}$ be a curve lying on the fiber
 $F_i= \Sigma \times \{t_i \}$  for $i= 1, \dots, k$ of the mapping torus  $T_f$. 
 Now $L_k=\delta_1\cup \dots  \cup  \delta_k$ is a link in $T_{f}$. 
 \medskip

\noindent  
{\bf Claim 1.} 
Let $\mathcal{N}(L_k)$ denote a regular neighborhood of the link $L_k$.
Then the $3$-manifold $N =\overline{T_{f}\setminus \mathcal{N}(L_k)}$ is hyperbolic. 
 \medskip

 \noindent  
 Proof of Claim 1. 
 Since $\gamma_1,\dots, \gamma_k$ are  essential simple closed curves in $\Sigma$,  
 the $3$-manifold $N$ is irreducible and boundary irreducible. 
 
We first show that $N$ is atoroidal, i.e. $N$ contains no essential embedded tori.
 Assume that there exists  a torus $T$  embedded in $N$ that is incompressible and not peripheral. 
 Since the fundamental group of a thickened surface  $\Sigma \times I$, 
 where $I$ is an interval, 
 does not contain free abelian subgroups of rank $2$, 
the torus $T$ must intersect some of the fibers $F_i$ of $T_f$, where the curves $\delta_i$ lie.

Without loss of generality, we may suppose that $T$ intersects the fiber $F_1$, 
where $\delta_1 = \gamma_1 \times \{t_1\}$ lies. 
We identify $\Sigma$ with the $t_1$-level $F_1 = \Sigma \times \{t_1\}$ in $T_f$. 
Let $W_{\Sigma}$ denote the manifold obtained  
by cutting $T_f$ open along $\Sigma$. 
Since we assumed that  $0<t_1 < t_2< \dots  <t_k <1$, in the beginning of the proof,
and $\Sigma$ is identified with the $t_1$-level  $F_1=\Sigma \times \{t_1\}$,  
the level surface $\Sigma \times \{ 1\}=\Sigma \times \{ -1\}$ (as a set) is disjoint from $F_1$.
We can view $W_{\Sigma}$  as the identification space
\begin{equation}
\label{eq:WS}
W_{\Sigma} =(\Sigma \times [t_1, 1] \coprod \Sigma \times [-1,t_1])/_{(x,1)\sim ({ {f}(x),-1)}}, \end{equation}
and $T_f$ is obtained from 
 $W_{\Sigma}$ by identifying the two copies of $\Sigma \times \{ t_1\}$ in $\partial W_{\Sigma}$ by the identity map.

By using the irreducibility of $T_f$ and the incompressibility of $\Sigma$,  
we may isotope the torus $T$ so that 
all components of  $T \setminus \Sigma$ are annuli, and 
each component $A$ of $T \setminus \Sigma$ is either vertical with respect to the $I$-product 
 (i.e. $A$ runs around the $S^1$ factor of $T_f$), or 
 there exists an annulus $\widehat{A}$ in one copy of $\Sigma \subset \partial W_{\Sigma}$ 
 such that 
 $A \cup  \widehat{A}$ bounds a solid torus in $W_ {\Sigma}$ 
 and $\partial A$ lies in the same copy of $\Sigma$, 
 where the annulus $\widehat{A}$ sits. 
   The former and latter annuli are called the {\it vertical} and {\it horizontal} annuli respectively.  
 There are two types (A1), (A2) for a horizontal annulus $A$. 
\begin{enumerate}
\item[(A1)]
There exist no curves $\delta_i$ which is contained in the solid torus bounded by $A \cup  \widehat{A}$. 

\item[(A2)] 
There exists a curve $\delta_i$ which is contained in the solid torus bounded by $A \cup  \widehat{A}$. 

\end{enumerate}
If $A$ is a horizontal annulus of type (A2), then 
the curve $\delta_i$ in the condition of (A2) is unique: 
If $\delta_i$ and $\delta_j$ ($i \ne j$) are contained in the solid torus 
bounded by $A \cup  \widehat{A}$, 
then $\delta_i$ and $\delta_j$ are isotopic in $W_{\Sigma}$, 
which implies that 
$O_f(\gamma_i)= O_f(\gamma_j)$. 
This contradicts the assumption that the orbits of $\gamma_1, \dots, \gamma_k$ under $f$ are distinct. 
Hence the curve $\delta_i$ in  (A2) is unique. 
In particular $\partial A$ consists of two curves which are parallel to $\gamma_i \times \{t_1\}$ 
since $\Sigma$ is identified with the $t_1$-level $F_1 = \Sigma \times \{t_1\}$.

Notice that each horizontal annulus of type (A1) can be removed by an isotopy of the torus $T$, 
and hence 
we may suppose that 
each component of $ T \setminus \Sigma$ is a vertical annulus or 
a horizontal annulus of type (A2). 

If there exists a horizontal annulus of type (A2), then 
by replacing the fiber $F_i$ (containing the curve $\delta_i$) 
with $ F_1$ if necessary, we have a horizontal annulus 
$A_1$ of $ T \setminus \Sigma$ whose  components of $\partial A_1$ are parallel to $\delta_1 = \gamma_1 \times \{t_1\}$.

Suppose that there exist no vertical annuli of $T \setminus \Sigma$. 
Then a horizontal annulus $A_1$ can only connect to a horizontal annulus $A_2$ with $\partial A_2$ 
running parallel to $f^{\pm 1}(\gamma_1) \times \{t_1\}$. 
But then $T$ will be boundary parallel (peripheral)  in $N$, contrary to our assumption.

From the above discussion,  we may suppose that 
$T \setminus \Sigma$ contains  a vertical annulus  $A $.  
 Let $P$ denote the component of $\partial A$ on one copy of $\Sigma\times \{t_1\}$ on  $\partial W_{\Sigma}$.  
 By the construction of $W_{\Sigma}$ in \eqref{eq:WS}, the boundary $\partial A$  is disjoint from the level surface in
 $W_{\Sigma}$ resulting from the identification of
 $\Sigma \times \{ 1\}$ to $\Sigma \times \{ -1\}$ via ${(x,1)\sim ({ {f}(x),-1)}}$.
 The intersection of the annulus $A$ 
 with the later level surface is the curve resulting from
 $P\times \{1\}\sim f(P)\times \{-1\}$ under above identification of $\Sigma \times \{ 1\}$ to $\Sigma \times \{ -1\}$.
Thus the component of $\partial A$ on the second copy of $\Sigma \times  \{t_1\} \subset \partial W_{\Sigma}$ is $f(P)$.
That is $A$ runs from $P$ to $f(P)$ in $W_{\Sigma}$.

We have two cases. 
\begin{enumerate}
\item[(1)] 
$T \setminus \Sigma$ contains a horizontal annulus $A_1$, or 

\item[(2)] 
 all of the components of $T \setminus \Sigma$ are vertical annuli. 
\end{enumerate}
We first consider the case (1). As discussed earlier, without loss of generality, we may assume that $\partial A_1$ 
is formed by two curves parallel to $\gamma_1 \times \{t_1\}$. 

 Since the case that the horizontal annulus $A_1$ connects to another horizontal annulus was excluded earlier, we may now assume that
 $A_1$ connects a vertical annulus $A$. Now this vertical annulus $A$  
eventually connects to another horizontal annulus $A'$ of type (A2) 
such that the solid torus bounded by $A' \cup \widehat{A'}$ contains a curve $\delta_j$ 
for some $j \in \{1, \dots, k\}$. 

Assume that $j=1$. 
Then the torus $T$ must have a self-intersection in $N$, and this is a contradiction. 

Next, we assume that $j \in  \{2, \dots, k\}$. 
Then $\partial A'$ is formed by two curves parallel to $\gamma_j \times \{t_1\}$.
Recall that  the curves $P\times \{t_1\}$ and
$f(P)\times \{t_1\}$,  viewed on different copies of $\Sigma \times \{t_1\}\subset  \partial W_{\Sigma}$,
form the boundary of the vertical annulus $A$. This implies that on $\Sigma:=\Sigma \times \{t_1\}$ we have
$P, f(P) \in O_f(\gamma_j) \cap O_f(\gamma_1) \ne \emptyset$. 
This contradicts the assumption that
the orbits of  $\gamma_1, \dots, \gamma_k$ under $f$ are distinct.

We turn to the case (2). 
To form the torus $T$ from vertical annuli, 
we have $f^{m}(P)=P$ for some $m > 0$. 
Arguing as above,  we conclude that the curves $P, f(P), \dots, f^{m-1}(P) $ on $\Sigma \times \{t_1\}\subset T_f$  lie on the torus $T$. 
Since $T$ is embedded in $N$ and all of the components of $T \setminus \Sigma$ are vertical annuli, 
we have $\gamma_i\cap f^{j}(P)=\emptyset$ for any $i=1, \dots, k$ and $j=1, \dots, m$. 
Equivalently we have $f^{-j}(\gamma_i)\cap P=\emptyset$ for any $i=1,\dots,k$ and $j=1, \dots, m$.
For any $n\in  {\Bbb Z}$, write $n=m \ell -j$ for some $\ell \in  {\Bbb Z}$ and some $j=1, \dots, m$.
For  any $i=1, \dots, k$, we obtain 
$$f^{m \ell}( f^{-j}(\gamma_i)\cap P) = f^{m \ell}(f^{-j}(\gamma_i))\cap f^{m \ell }(P)= f^{m \ell -j}(\gamma_i)\cap P  
= f^n(\gamma_i)\cap P =  \emptyset.$$
Thus for any $i=1, \dots, k$,
the curve $P$ must be disjoint from  $O_f(\gamma_i)$. 
However this contradicts our assumption that the orbits of $\gamma_1, \dots, \gamma_k$ under $f$ fill $\Sigma$. 
This implies that $N$ is atoroidal.

To finish the proof of Claim 1, it is enough to show that 
$N$ contains no essential annuli. 
Suppose that there exists an  essential annulus $A$ in $N$.  
Then $N$ must be a Seifert manifold (see \cite[Lemma 1.16]{Hatcher}),  
and the components of $\partial N$ consist
of  fibers of the Seifert fibration of $N$. 
In $N$, we can find a copy of the fiber $\Sigma$ of $T_f$, say $S$, 
that  is disjoint from the components $T_1, \dots, T_k$ 
of  $\partial N $ 
that are created by drilling out the curves $\delta_1, \dots, \delta_k$.  
Then $S$ is a surface  that is essential in the Seifert manifold $N$ with non-empty boundary.
Since we assumed that $3g-3+m>0$, $S$ is not a torus or an annulus. 
Thus up to isotopy, we can make $S$ horizontal which means that
$S$ must intersect all the fibers of the Seifert fibration of $N$ 
transversely, see \cite[Proposition 1.11]{Hatcher}. 
Since $S$ is disjoint from the components $T_1, \dots, T_k$ of $\partial N$, 
 it cannot become horizontal. 
 This contradiction implies that $N$ contains no essential annuli. 
 Thus by work of Thurston \cite{thurston:survey}, the manifold $N$ is hyperbolic. 
This completes the proof of Claim 1. 
\medskip

We now prove the claim (a). 
We denote by $N_{\bm n}$, the mapping torus $T_{f_{\bm n}}$ of 
$f_{\bm n}= \tau^{n_k}_{\gamma_k}  \dots  \tau^{n_1}_{\gamma_1} f$ 
for ${\bm n}=(n_1, \dots, n_k) \in {\Z}^k$. 
We use the fact that   $N_{{\bm n}}$ 
 is obtained from $N$ by the Dehn filling, 
 where the  boundary component $T_i \subset \partial N$ corresponding to $\delta_i$ is filled. 
  Given ${\bm n}=(n_1, \dots, n_k)\in {\Z}^k$, 
 let $s_i$  denote the Dehn filling slope on  $T_i\subset \partial N$ to obtain $N_{\bm n}$ 
 for $i = 1, \dots, k$. 
 Since $N$ is hyperbolic, each torus boundary component of $N$ corresponds to a cusp of $T_f \setminus L_k$. 
 Taking a maximal disjoint horoball neighborhood about the cusps, each torus $T_i$ inherits a Euclidean structure, 
 well-defined up to similarity. The slope $s_i$ can then be given a geodesic representative. 
 We define the length of $s_i$, denoted by $\ell(s_i)$, 
 to be the length of this geodesic representative. 
 (Note that when $k > 1$, this definition of slope length depends on the choice of maximal horoball neighborhood. 
 See \cite{Purcell}.)

 The length  $\ell(s_i)$ of the slope $s_i$ is an increasing function of $|n_i|$. 
Let $\lambda >0$ denote the minimum length of the slopes, that is 
$$\lambda = {\min}\{\ell(s_i)\ |\ i = 1, \dots, k\}.$$
By Thurston's hyperbolic Dehn surgery theorem \cite{thurston:notes}, 
there exists $n\in \NN$ such that for all
 ${\bm n}=(n_1, \dots, n_k)\in {\Z}^k$ with $|n_i|>n$ for $i= 1, \dots, k$, 
 the resulting manifold $N_{\bm n} (= T_{f_{\bm n}})$ obtained by filling $N$ 
 is hyperbolic, and hence $f_{\bm n}$ is pseudo-Anosov.
 Thus the claim (a) holds.

 We turn to the claim (b). 
 As $|n_i|\to \infty$ for all  $i=1, \dots, k$, 
 the volumes of the filled manifolds $N_{\bm n}$'s  approach the volume of the $3$-manifold $T_f \setminus L_k$ from bellow.
 To make things more concrete, 
 we use an effective form  proved in \cite[Theorem 1.1]{fkp:filling}, 
 which states that if $\lambda >2\pi$, then  $N_{\bm n}$ is hyperbolic and we have
\begin{equation} 
\label{eq:variation}
 \left(1-\left(\frac{2\pi}{\lambda}\right)^2\right)^{3/2}\vol(T_{f}\setminus L_k)  \leqslant\ \vol(N_{\bm n})  <\  \vol(T_{f}\setminus L_k).
 \end{equation}
Since $T_{f}\setminus L_k$ is a hyperbolic $3$-manifold with at least $k$ cusps,  
we have 
$$k\ v_3 <  \vol(T_{f}\setminus L_k),$$ 
where $v_3= 1.01494\dots$ is the volume of the ideal regular tetrahedron, 
see  \cite[Theorem 7]{Adams}. 

On the other hand, by taking ${\bm n}=(n_1, \dots, n_k)\in {\Z}^k$ 
with all $|n_i|$ sufficiently larger than $n$, 
we can assure that 
$$ \frac{1}{2} < \left(1-\left(\frac{2\pi}{\lambda}\right)^2\right)^{3/2}.$$

By (\ref{eq:variation}), 
we obtain  

\begin{equation} 
\label{eq:variation2}
\frac{1}{2}\ k < \frac{1}{2}\ k v_3 < \frac{1}{2}\ \vol(T_{f}\setminus L_k)< \vol(N_{\bm n}).
\end{equation}

We set ${\bm n}_1 = {\bm n}$ with the above inequality $\frac{1}{2}\ k <\vol(N_{{\bm n}_1}) $.  
 Suppose that there exists a finite sequence  $\{{\bm n}_{\ell} \}_{\ell=1}^m$ 
 of  the  $k$-tuple of integers ${\bm n}_{\ell} \in {\Bbb Z}^k$ 
such that
$$\frac{1}{2}\ k< \vol(N_{ {\bm n}_1})< \dots <\vol(N_{ {\bm n}_m})< \vol(T_{f}\setminus L_k).$$
Now we  choose ${\bm n}_{m+1}=(n'_1, \dots, n'_k)\in {\Z}^k$ with
all $|n'_i|$  sufficiently larger than $n$ so that 
if we let $\lambda= \lambda_{{\bm n}_{m+1}}$ be the minimal length of the slopes corresponding to ${\bm n}_{m+1} \in {\Bbb Z}^k$, 
then we have 
$$\lambda >\frac{2\pi}{\sqrt {1-x_m^{2/3}}}>2\pi, \ \ {\rm where} \ \  
x_m =\frac{\vol(N_{ {\bm n}_m})}{\vol(T_{f}\setminus L_k)}.$$
Hence $\left(\frac{2\pi}{\lambda}\right)^2 < 1 - x_m^{2/3}$. 
This tells us that 
$$\frac{\vol(N_{ {\bm n}_m})}{\vol(T_{f}\setminus L_k)} = x_m
<  \left(1-\left(\frac{2\pi}{\lambda}\right)^2\right)^{3/2}.$$
Thus 
$$\vol(N_{ {\bm n}_{m}}) < \left(1-\left(\frac{2\pi}{\lambda}\right)^2\right)^{3/2}\vol(T_{f}\setminus L_k).$$ 
By \eqref{eq:variation}, we have 
$$\left(1-\left(\frac{2\pi}{\lambda}\right)^2\right)^{3/2}\vol(T_{f}\setminus L_k)  \le \vol(N_{ {\bm n}_{m+1}}).$$
Putting them together, we obtain 
\begin{equation*} 
\vol(N_{ {\bm n}_{m}}) < \left(1-\left(\frac{2\pi}{\lambda}\right)^2\right)^{3/2}\vol(T_{f}\setminus L_k)  \le \vol(N_{ {\bm n}_{m+1}}), 
\end{equation*}
and the conclusion follows inductively. 
This completes the proof of Theorem \ref{hyperbolic}. 
 \end{proof} 
  
  \section{Curves on surfaces and open book decompositions}
  \label{section_curve-openbook}
  
 In this section, 
 we quickly review curve graphs and open book decompositions of $3$-manifolds. 
 We prove a lemma 
 that is needed for the proof of Theorem \ref{thm_main}.
 
\subsection{Curves on surfaces} 
\label{subsec:curves}

Suppose that $g \ge 2$. 
The \textit{curve graph} ${\mathcal C}(\Sigma)$ for $\Sigma=\Sigma_{g,m}$  is defined as follows. 
  The set of vertices ${\mathcal C}_0(\Sigma)$ is the set of isotopy classes of  essential simple closed curves. 
 Two vertices  in $ {\mathcal C}_0(\Sigma)$ are connected by an edge 
 if they can be represented by  disjoint  essential simple closed curves.  
 The space  ${\mathcal C}(\Sigma)$  is a geodesic metric space with the path metric $d(\cdot, \cdot)$ 
 that assigns length 1 to each edge of the graph. 
The mapping class group  $\mathrm{MCG}(\Sigma)$ acts on ${\mathcal C}(\Sigma)$  as isometries. 

\begin{center}
\begin{figure}[t]
\includegraphics[height=2.3cm]{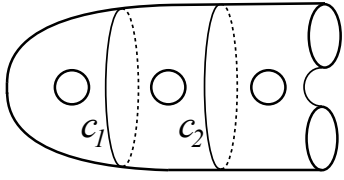}
\caption{Case $(g,m)= (3.2)$. An example of simple closed curves $c_1, \dots, c_{g-1}$ in $\Sigma_{g,m}$. }
\label{fig_arc}
\end{figure}
\end{center}

 \begin{lem}
 \label{lem_curve} 
Let $f \in \mathrm{MCG}(\Sigma)$ be a pseudo-Anosov mapping class 
defined on $ \Sigma= \Sigma_{g,m}$, where $g \ge 2$.  
Then for any $1 \le k \le g$, there exist mutually disjoint, essential simple closed curves 
$\gamma_1, \dots, \gamma_k$ in $\Sigma$ such that 
\begin{enumerate}
\item[(a)] 
the orbits of $\gamma_1, \dots, \gamma_k$   under $f$ are distinct and  fill $\Sigma$, and 

\item[(b)] 
the surface $\Sigma \setminus \{\gamma_1, \dots, \gamma_k\}$ obtained from $\Sigma$ cutting along $\gamma_1 \cup \dots \cup \gamma_k$ 
is connected. 

\end{enumerate}
\end{lem}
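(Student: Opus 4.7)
The plan is to construct $\gamma_1,\dots,\gamma_k$ inductively, after observing that condition (a) splits into two requirements and that the filling one is essentially automatic. For a pseudo-Anosov $f$ with stable and unstable measured foliations $\mathcal{F}^s,\mathcal{F}^u$ and stretch factor $\lambda>1$, one has $\lambda^{-n}f^{n}(\gamma)\to c\,\mathcal{F}^u$ in $\mathcal{ML}(\Sigma)$ as $n\to+\infty$ for any essential simple closed curve $\gamma$. Since $\mathcal{F}^u$ fills, this yields
\[
i_{\Sigma}(f^{n}(\gamma),\delta)\to\infty \qquad (n\to+\infty)
\]
for every essential simple closed curve $\delta$. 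Hence for $N$ large enough the finite family $\{f^{\ell}(\gamma_1):|\ell|\le N\}$ already fills $\Sigma$, so the orbits of $\gamma_1,\dots,\gamma_k$ together fill $\Sigma$ no matter how $\gamma_2,\dots,\gamma_k$ are chosen.

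The task therefore reduces to finding pairwise disjoint $\gamma_1,\dots,\gamma_k$ with distinct $f$-orbits whose complement is connected. I would build them one at a time. Pick any non-separating essential simple closed curve $\gamma_1\subset\Sigma$. Assume $\gamma_1,\dots,\gamma_i$ with $i<k$ have been chosen satisfying (b) and with pairwise distinct orbits. The cut surface
\[
\Sigma_i:=\Sigma\setminus(\gamma_1\cup\dots\cup\gamma_i)
\]
is then connected of genus $g-i\ge g-(k-1)\ge 1$, so it carries infinitely many pairwise non-isotopic essential simple closed curves $\delta$ that are non-separating in $\Sigma_i$. Viewed in $\Sigma$, any such $\delta$ is disjoint from each $\gamma_j$, is not isotopic to any $\gamma_j$, and keeps $\Sigma\setminus(\gamma_1\cup\dots\cup\gamma_i\cup\delta)$ connected; it is therefore a legitimate candidate for $\gamma_{i+1}$ with respect to pairwise disjointness and (b).

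The key step, and the one I expect to require the most care, is ruling out those candidates that accidentally land in an already-chosen orbit. If $\delta\in O_f(\gamma_j)$ for some $j\le i$, write $\delta=f^{n}(\gamma_j)$; disjointness of $\delta$ from $\gamma_j$ forces $i_{\Sigma}(f^{n}(\gamma_j),\gamma_j)=0$. But by the same pseudo-Anosov dynamics used above, $\lambda^{-|n|}\,i_{\Sigma}(f^{n}(\gamma_j),\gamma_j)$ converges to a positive limit as $|n|\to\infty$ (using $\mathcal{F}^u$ for $n\to+\infty$ and $\mathcal{F}^s$ for $n\to-\infty$, both filling), so $i_{\Sigma}(f^{n}(\gamma_j),\gamma_j)$ vanishes for only finitely many $n\in\mathbb{Z}$. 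Hence $O_f(\gamma_j)$ contains only finitely many curves disjoint from $\gamma_j$, and the union $\bigcup_{j=1}^{i}O_f(\gamma_j)$ meets our infinite family of candidates in only finitely many elements. Any $\delta$ outside this finite bad set works as $\gamma_{i+1}$, and the induction terminates at $i+1=k$.

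The only nontrivial technical input is the exponential asymptotic $i_{\Sigma}(f^{n}(\gamma),\gamma)\asymp\lambda^{|n|}$, which is a standard consequence of Thurston's north-south dynamics of $f$ on $\mathcal{PML}(\Sigma)$ together with the fact that $\mathcal{F}^s,\mathcal{F}^u$ are filling; I would cite it rather than reprove it. Everything else is bookkeeping about non-separating curves in a connected surface.
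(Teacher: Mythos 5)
Your proof is correct and takes a genuinely different route from the paper. The paper first cuts $\Sigma$ along $g-1$ disjoint curves into $g$ genus-one pieces $\Sigma^{(1)},\dots,\Sigma^{(g)}$ and picks $\gamma_i$ to be a non-separating curve inside $\Sigma^{(i)}$; disjointness and connectedness of the complement are then automatic, and the only nontrivial point, distinctness of the orbits, is handled via coarse geometry of the curve graph (Masur--Minsky: $d(a,f^{n}(a))$ grows linearly in $|n|$, so $B_1(a)\cap O_f(b)$ is finite, while $\mathcal{C}(\Sigma)$ is locally infinite). You dispense with the pre-chosen decomposition and instead build the $\gamma_i$ inductively, cutting along the curves as you go, and you replace the curve-graph input with north--south dynamics on $\mathcal{PML}(\Sigma)$: the asymptotic $i_{\Sigma}(f^{n}(\gamma_j),\gamma_j)\asymp\lambda^{|n|}$ shows that $O_f(\gamma_j)$ contains only finitely many curves disjoint from $\gamma_j$. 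Both arguments encode the same phenomenon (a pseudo-Anosov orbit meets the set of curves disjoint from a fixed curve in a finite set); yours is more directly dynamical and avoids curve-graph machinery, while the paper's fixed decomposition makes the disjointness and connectedness bookkeeping transparent up front. One small point you should make explicit in the ``bookkeeping'': you need that essential, non-boundary-parallel curves in the incompressible subsurface $\Sigma_i$ remain pairwise non-isotopic when viewed in $\Sigma$, so that your infinite family of candidates in $\Sigma_i$ really does give infinitely many candidates in $\Sigma$. This is standard (for instance via geodesic representatives in a hyperbolic metric on $\Sigma$ for which $\partial\Sigma_i$ is geodesic), but it is doing real work in the inductive step.
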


\begin{rem}
\label{rem_discussion}
The curve graph $\mathcal{C}(\Sigma)$ is locally infinite, 
i.e.  for each vertex $v \in \mathcal{C}_0(\Sigma)$, 
there exist infinitely many vertices of $\mathcal{C}_0(\Sigma)$ that are at distance $1$ from $v$.
It is not hard to see that 
 if $d (a, b) \ge 3$ for  $a, b \in \mathcal{C}_0(\Sigma)$, then 
 $\{a, b\}$ fills $\Sigma$. 
Furthermore, if $f \in \mathrm{MCG}(\Sigma)$ is pseudo-Anosov, 
then 
the distance $d(a, f^n(a))$ grows linearly with $|n|$ for any $a \in \mathcal{C}_0(\Sigma)$, 
see \cite[Proposition 4.6]{MaMi}. 
 We define the ball 
 $$B_1(a) = \{b   \in \mathcal{C}_0(\Sigma)\ |\ d(a, b) \le 1\}.$$
 Since $d (a, f^n(a)) \to \infty$ as $|n| \to \infty$, 
 a single orbit $O_f(a)$  of $a \in  {\mathcal C}_0(\Sigma)$ under $f$ fills $\Sigma$. 
 Take any $b \in \mathcal{C}_0(\Sigma)$. 
 Then the cardinality of the set $B_1(a) \cap O_f(b)$ is finite, 
 since $d (b, f^n(b)) \to \infty$ as $|n| \to \infty$. 
 Moreover $B_1(a) \setminus O_f(b)$ is an infinite set, 
 since  $\mathcal{C}(\Sigma)$ is locally infinite. 
 Hence one can pick an element of $B_1(a) \setminus O_f(b)$ 
 at distance $1$ from $a$. 
 \end{rem}

\begin{proof}[Proof of Lemma \ref{lem_curve}]
We first  take mutually disjoint, essential simple closed curves 
$c_1, \dots, c_{g-1}$ in $\Sigma$ so that 
the surface obtained from $\Sigma$ by cutting along $c_1 \cup \dots \cup c_{g-1}$ 
has $g$ connected components $\Sigma^{(1)},\dots,\Sigma^{(g)}$, 
each of which is a surface of genus $1$ with nonempty boundary. 
See Figure \ref{fig_arc}. 

For each $1 \le k \le g$, 
there exists an infinite family $\{a_i^{(k)} \}_{i \in {\Bbb N}}$ of $ \mathcal{C}_0(\Sigma)$ such that 
$a_i^{(k)} \ne a_j^{(k)} \in \mathcal{C}_0(\Sigma)$ if $i \ne j$ and 
$a_i^{(k)}$ is represented by a non-separating simple closed curve in the surface $\Sigma^{(k)}$. 
Then 
\begin{equation}
\label{equation_distance-one}
d(a_i^{(k)}, a_j^{(\ell)})= 1 
\hspace{2mm} \mathrm{if}\hspace{2mm} 
  k \ne \ell \hspace{2mm}\mbox{and} \hspace{2mm}  i,j \in {\Bbb N}. 
\end{equation}
In the family $\{a_i^{(1)}\}_{i \in {\Bbb N}}$, 
take any $a_{i_1}^{(1)} = [\gamma_1] $. 
Then 
the orbit of $\gamma_1$ under $f$ fills $\Sigma$ by Remark  \ref{rem_discussion}. 
The statement of the lemma holds in the case $k=1$. 

We turn to the case $k=2$. 
By (\ref{equation_distance-one}), we have 
$\{a_i^{(2)}\}_{i \in {\Bbb N}}  \subset B_1([\gamma_1]) = B_1(a_{i_1}^{(1)})$. 
By Remark \ref{rem_discussion}, one sees that 
$\{a_i^{(2)}\}_{i \in {\Bbb N}}  \cap O_f(\gamma_1)\  \Bigl(\subset B_1([\gamma_1]) \cap O_f(\gamma_1) \Bigr)$ is finite. 
Hence one can pick an element 
$a_{i_2}^{(2)} = [\gamma_2] \in \{a_i^{(2)}\}_{i \in {\Bbb N}}    \setminus O_f(\gamma_1)$. 
Then the orbits of $\gamma_1$ and $\gamma_2$ under $f$ are distinct by the choice of $\gamma_2$. 
By (\ref{equation_distance-one}), two curves $\gamma_1 $ and $\gamma_2 $ are disjoint. 
Moreover the orbits of $\gamma_1$ and $ \gamma_2$ under $f$ fill $\Sigma$, 
since a single orbit of $\gamma_1$ under $f$ fills $\Sigma$. 
Since $\gamma_1$ (resp. $\gamma_2$) is non-separating in the surface $\Sigma^{(1)}$ (resp. $\Sigma^{(2)}$), 
one sees that $\Sigma \setminus \{\gamma_1, \gamma_2\}$ is connected. 
Thus the statement of the lemma holds in the case $k=2$.

Similarly for $3 \le k \le g$, 
one can find the vertices 
$a_{i_3}^{(3)} = [\gamma_3], \dots, a_{i_k}^{(k)} = [\gamma_k]  $ 
such that 
the orbits of $\gamma_1, \gamma_2$, $\gamma_3, \dots, \gamma_k$ 
under $f$ are distinct and fill $\Sigma$. 
By (\ref{equation_distance-one}), 
$\gamma_1, \gamma_2, \gamma_3  \dots, \gamma_k$ are mutually disjoint. 
Each $\gamma_i$ is non-separating in the surface $\Sigma^{(i)}$ for $i = 1, \dots, k$, 
and this implies that 
$\Sigma \setminus \{\gamma_1, \dots, \gamma_k \}$ is connected. 
This completes the proof.
\end{proof}

 \subsection{Open book decompositions of closed $3$-manifolds}  
 \label{subsection_openbook}

An {\it open book decomposition} of $M$ is a pair $(K, \theta)$, 
where $K$ is a link in $M$ and $\theta: M \setminus K \rightarrow S^1$ is a fibration 
whose fiber is an interior of a Seifert surface of $K$. 
We call $K$  the {\it binding} of the open book decomposition. 
We also call $K$  the {\it fibered link} in $M$. 
An open book decomposition of $M$ is determined by 
the closure $\Sigma= \overline{\theta^{-1}(t)}  \subset M$ of a fiber $\theta^{-1}(t)$ ($t \in S^1$) 
of the fibration $\theta$
together with the monodromy $h : \Sigma \rightarrow \Sigma$ with $h|_{\partial \Sigma}= \mathrm{id}$. 
Conversely, 
each pair  $(\Sigma, h)$ with $h|_{\partial \Sigma}= \mathrm{id}$ 
gives rise to an open book decomposition of some $3$-manifold $M$ 
as the {\it relative mapping torus} of  $h$, i.e. 
$M$ is homeomorphic to the quotient of the mapping torus $T_h$ of $h$ 
under the identification 
$(y,t)\sim (y, t')$ for all  $y \in\partial \Sigma$ and $t,t'\in [-1, 1]$. 
We also call such a pair $(\Sigma, h)$ the  open book decomposition of a $3$-manifold.

 By the proof of  \cite[Theorem 1.1]{ColinHonda} by Colin-Honda, the following result holds. 
See also  Detcherry-Kalfagianni \cite[Propositions 4.9, 4.10]{DK:cosets}. 

  \begin{thm}
\label{thm:colin-honda} 
Let $M$ be a closed, connected, oriented $3$-manifold containing a hyperbolic fibered knot of genus $g_0 \ge 2$. 
Then for any $g \ge g_0$ and $j \in \{1,2\}$, 
the manifold $M$ admits an open book decomposition 
$(\Sigma_{g,j}, h_{g,j})$, 
 where $\partial \Sigma_{g,j}$ has $j$ components and 
 $h_{g,j}$ is isotopic to a pseudo-Anosov homeomorphism. 
\end{thm}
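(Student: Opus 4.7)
The plan is to start from the open book decomposition provided by the hypothesis and apply a sequence of positive Hopf stabilizations, choosing the stabilization arcs carefully so as to reach the desired fiber topology $(g,j)$ without destroying the pseudo-Anosov property of the monodromy.

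First, I would observe that the hypothesis gives an open book $(\Sigma_{g_0,1}, h_0)$ of $M$ whose binding is the given hyperbolic fibered knot of genus $g_0$. Since the knot complement fibers over $S^1$ with fiber $\Sigma_{g_0,1}$ and monodromy $h_0$, and since this complement is hyperbolic, Thurston's theorem on the geometrization of fibered $3$-manifolds \cite{Thurston98} implies that $h_0$ is isotopic to a pseudo-Anosov homeomorphism.

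Next, I would set up the inductive step via positive Hopf stabilization. Given an open book $(\Sigma, h)$ and a properly embedded arc $\alpha \subset \Sigma$, the stabilization along $\alpha$ produces a new open book $(\Sigma', h')$ of the \emph{same} manifold $M$: here $\Sigma'$ is obtained by attaching a $1$-handle $H$ to $\Sigma$ along a regular neighborhood of $\partial \alpha$, and the new monodromy is $h' = \tau_c \circ \hat{h}$, where $\hat{h}$ extends $h$ by the identity on $H$ and $c = \alpha \cup \beta$ is the core of the new Hopf band, with $\beta$ the core arc of $H$. On the surface level, if the two endpoints of $\alpha$ lie on the same component of $\partial \Sigma$, then the number of boundary components increases by one while the genus is unchanged; if they lie on different components, the genus increases by one and the boundary count decreases by one. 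By alternating these two types of arcs appropriately, any target pair $(g,j)$ with $g \geq g_0$ and $j \in \{1,2\}$ can be reached in at most $2(g-g_0)+1$ stabilizations.

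The hard part is guaranteeing that the monodromy remains isotopic to a pseudo-Anosov map after each stabilization, since an arbitrary choice of arc may well introduce a reducing curve for $h'$. This is precisely the technical content of \cite[Theorem 1.1]{ColinHonda} and the refinements in \cite[Propositions 4.9 and 4.10]{DK:cosets}: for each stabilization one can select the arc $\alpha$ in a sufficiently generic position relative to the stable and unstable foliations of the current pseudo-Anosov (and relative to the curves introduced by previous stabilizations) so that the new Dehn twist along $c$ composed with $\hat{h}$ remains pseudo-Anosov. I would invoke this as a black box; the essence of their argument is that only a meager subset of the infinite family of isotopy classes of arcs can fail to produce a pseudo-Anosov stabilized monodromy, so that a suitable choice always exists, and iterating the construction yields the desired open book $(\Sigma_{g,j}, h_{g,j})$.
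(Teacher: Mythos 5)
Your proposal takes essentially the same route as the paper, which does not give its own proof of this statement but attributes it directly to the proof of \cite[Theorem 1.1]{ColinHonda} and to \cite[Propositions 4.9, 4.10]{DK:cosets}. Your expansion is accurate: the starting open book comes from the hypothesis, $h_0$ is pseudo-Anosov by Thurston, and the Hopf-stabilization bookkeeping (same-boundary-component arc gives $\Sigma_{g,m}\to\Sigma_{g,m+1}$, different-component arc gives $\Sigma_{g,m}\to\Sigma_{g+1,m-1}$, so $\Sigma_{g_0,1}$ reaches any $(g,j)$ with $g\ge g_0$, $j\in\{1,2\}$ in at most $2(g-g_0)+1$ steps) is exactly right, with the pseudo-Anosov persistence of the stabilized monodromy imported from the cited sources just as in the paper.
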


\section{Proof of Theorem \ref{thm_main}}
\label{section_proof-of-theorem}

Theorem \ref{thm_main} in Section \ref{section_intro} follows from the following result. 

\begin{thm}
\label{adding_arc}
Let $M$ be a closed, connected, oriented $3$-manifold containing a hyperbolic fibered knot of genus $g_0 \ge 2$. 
Then for any $g \ge g_0$, $j \in \{1,2\}$ and $2 \le k \le g$,   
there exists  $n  \in {\Bbb N}$ which satisfies the following. 
For any 
 ${\bm n}=(n_1, \dots, n_k) \in {\mathbb Z}^k$  with $|n_i| \ge n$ for $i= 1, \dots, k$, 
there exists a hyperbolic $3$-manifold $N_{\bm n}$ such that
\begin{itemize}
\item [(a)] $N_{\bm n}$ is a $\Sigma_{2g+ j-1}$-surface bundle over $S^1$, 
\item [(b)] $N_{\bm n}$ is a $2$-fold branched cover of $M$ 
branched over a $2j$-component  link, and  
\item [(c)] 
there exists a sequence $\{{\bm n}_{\ell} \}_{\ell \in \NN} $ of the $k$-tuple of integers  
${\bm n}_{\ell} = (n_{\ell_1}, \dots, n_{\ell_k}) \in  {\mathbb Z}^k$ 
with $|n_{\ell_i}| \ge n$ for $i= 1, \dots, k$ 
such that 
$$\frac{1}{2}\ k < \vol(N_{ {\bm n}_{\ell}})< \vol(N_{ {\bm n}_{\ell+1}}) \hspace{2mm}\   \mbox{for}\hspace{2mm} \ \ell \in {\Bbb N}.$$ 

\end{itemize}
\end{thm}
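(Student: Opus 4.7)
The plan is to combine the Fathi--Long--Morton-style Theorem \ref{hyperbolic} with a Montesinos-type $2$-fold branched cover of $M$, applied equivariantly along curves that descend to fillers downstairs.

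First, by Theorem \ref{thm:colin-honda}, $M$ admits an open book $(\Sigma, h) := (\Sigma_{g,j}, h_{g,j})$ in which $h$ is isotopic to a pseudo-Anosov; I replace $h$ by that pseudo-Anosov representative. Generalizing the Montesinos construction (Theorem \ref{thm_Montesinos}) to $j \in \{1,2\}$, I obtain a $2$-fold branched cover $p: \hat M \to M$, branched over a $2j$-component link $B_0 \subset M$, such that $\hat M = T_{\hat h}$ is a $\Sigma_{2g+j-1}$-surface bundle over $S^1$ whose monodromy $\hat h$ is pseudo-Anosov. The covering involution $\sigma$ acts fiberwise on $\hat M$; on the fiber $\hat\Sigma = \Sigma_{2g+j-1}$ its restriction $\bar\sigma$ has $2j$ branch points (corresponding to $B_0$ meeting the fiber) and commutes with $\hat h$.

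Next I apply Lemma \ref{lem_curve} to $h$ on $\Sigma = \Sigma_{g,j}$ (valid since $g \ge g_0 \ge 2$ and $2 \le k \le g$) to produce mutually disjoint essential simple closed curves $\gamma_1, \ldots, \gamma_k \subset \Sigma$ whose $h$-orbits are distinct and fill $\Sigma$, with $\Sigma \setminus \{\gamma_1, \ldots, \gamma_k\}$ connected. After a small isotopy, each $\gamma_i$ avoids the $2j$ fiber branch points of $\bar\sigma$, so it lifts to a pair of disjoint essential simple closed curves $\gamma_i^+, \gamma_i^- \subset \hat\Sigma$ swapped by $\bar\sigma$. I check the hypotheses of Theorem \ref{hyperbolic} for the $2k$ curves $\{\gamma_i^\pm\}_{i=1}^{k}$ on $\hat\Sigma$ under $\hat h$: mutual disjointness is immediate, distinctness of $\hat h$-orbits reduces via $p$ to distinctness of $h$-orbits downstairs combined with $\hat h$ preserving each sheet of the cover, and the filling property lifts from $\Sigma$ to $\hat\Sigma$. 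Theorem \ref{hyperbolic} then provides a threshold $n$ such that for every $\mathbf{n} = (n_1, \ldots, n_k) \in \Z^k$ with $|n_i| \ge n$, the mapping class
\begin{equation*}
\hat f_{\mathbf{n}} \;=\; \tau_{\gamma_k^+}^{n_k} \tau_{\gamma_k^-}^{n_k} \cdots \tau_{\gamma_1^+}^{n_1} \tau_{\gamma_1^-}^{n_1} \, \hat h
\end{equation*}
is pseudo-Anosov, and $N_{\mathbf{n}} := T_{\hat f_{\mathbf{n}}}$ is a hyperbolic $\Sigma_{2g+j-1}$-bundle over $S^1$. Moreover there is a sequence $\{\mathbf{n}_\ell\}$ along which the volumes strictly increase and satisfy $\vol(N_{\mathbf{n}_\ell}) > \tfrac{1}{2}(2k) = k > \tfrac{1}{2} k$, yielding (a) and (c).

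The main obstacle is (b): exhibiting a $2$-fold branched covering $N_{\mathbf{n}} \to M$ whose branch set is a $2j$-component link in $M$. Each factor $\tau_{\gamma_i^+}^{n_i} \tau_{\gamma_i^-}^{n_i}$ is $\bar\sigma$-equivariant, so $\bar\sigma$ extends to an involution $\tilde\sigma$ of $N_{\mathbf{n}}$. The plan is to interpret the $\bar\sigma$-equivariant modification of $\hat h$ upstairs as a modification of the branch link downstairs, without changing the ambient manifold $M$. Concretely, a full twist of the $2j$ fiber branch points about $\gamma_i$ is a pure braid of the configuration space of the fiber, lifting via the Birman--Hilden correspondence to $\tau_{\gamma_i^+} \tau_{\gamma_i^-}$ on $\hat\Sigma$; inserting $n_i$ such full twists into $B_0$ near fiber level $\{t_i\}$, for each $i$, replaces $B_0$ by a new link $B_{\mathbf{n}} \subset M$ that still has $2j$ components (the braid is pure) and sits in the same ambient manifold $M$ (the insertion only modifies the link's isotopy class, not $M$). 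The $2$-fold branched cover of $M$ over $B_{\mathbf{n}}$ then has monodromy exactly $\hat f_{\mathbf{n}}$, so it equals $N_{\mathbf{n}}$. The hardest step is this last identification, in particular the careful local verification, in a neighborhood of each $\gamma_i \times \{t_i\}$, that inserting full twists about the $\gamma_i$ into the branch link produces exactly the expected equivariant Dehn-twist monodromy upstairs.
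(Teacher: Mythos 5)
Your proof contains a genuine gap rooted in a misreading of the geometry of the Montesinos double cover. The covering involution $u$ on $T_{\widehat{h}_{g,j}}$ is not fiberwise: it is $u(x,t)=(\iota(x),-t)$, reversing the circle direction. Its restriction to the only invariant fibers ($t=0,1$) is the doubling involution $\iota$ of $\Sigma_{2g+j-1}=DF_{g,j}$, whose fixed set is the $j$ circles $\partial F_{g,j}$, not $2j$ isolated branch points. Hence the picture of a braid-like branch locus and the appeal to Birman--Hilden / pure braids do not apply; indeed $\widehat{h}_{g,j}$ is reducible (it preserves $\partial F_{g,j}$), contrary to your assertion that it is pseudo-Anosov.

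This causes two independent failures. First, the lifted curves $\gamma_i^{\pm}$ live in the interiors of the two copies of $F_{g,j}$ and, together with all of their $\widehat{h}_{g,j}$-iterates, remain disjoint from the essential curves $\partial F_{g,j}\subset \Sigma_{2g+j-1}$; their orbits therefore do not fill $\Sigma_{2g+j-1}$, so the hypothesis of Theorem~\ref{hyperbolic} fails and pseudo-Anosovity of $\widehat{f}_{\bm n}$ is not established. Second, even if filling held, a swapped pair $\gamma_i^{+}\times\{t_i\}\cup\gamma_i^{-}\times\{-t_i\}$ is disjoint from $\mathrm{Fix}(u)$, so the $u$-quotient of an invariant tubular neighborhood is a solid torus (not a $3$-ball) and equivariant Dehn surgery descends to an honest Dehn surgery on $M$ along the image of $\gamma_i$, changing the ambient manifold --- so (b) fails. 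The paper avoids both problems by instead twisting along the $\iota$-invariant curves $\widehat{\gamma}_i=\alpha_i\cup\iota(\alpha_i)$ obtained by doubling arcs $\alpha_i$ that run from $\gamma_i$ to $\partial F_{g,j}$: each $\widehat{\gamma}_i$ meets $\mathrm{Fix}(\iota)$ in two points, which makes the orbits fill the closed double (the arcs cut the annular regions near $\partial F_{g,j}$ into disks, Claim~2) and makes $\widehat{\gamma}_i\times\{0\}$ a $u$-invariant circle meeting $\mathrm{Fix}(u)$ in two points, so the classical Montesinos trick replaces a tangle in a $3$-ball of $M$ without changing $M$ (Claim~4).
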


\begin{proof} 
By Theorem \ref{thm:colin-honda},  for any $g \ge g_0$ and $j \in \{1,2\}$,  
there exists an open book decomposition 
$(\Sigma_{g,j}, h_{g,j})$ of $M$, 
 where 
 $h_{g,j}$ is isotopic to a pseudo-Anosov homeomorphism. 
We set $F_ {g,j}= \Sigma_ {g,j}$. 
Then by Lemma \ref{lem_curve}, 
we have 
mutually disjoint, essential simple closed curves $\gamma_1,\dots, \gamma_k$ in $F_{g, j}$ 
such that the orbits of  $\gamma_1,\dots, \gamma_k$ under $h_{g, j}$ are distinct and fill $F_{g, j}$. 
Moreover $F_{g,j} \setminus \{\gamma_1, \dots, \gamma_k\}$ is connected.

Let $B= \partial F_{g, 1}$  when $j=1$, and 
let $B$ and $ B'$ be the components of $\partial F_{g, 2}$ when $j=2$. 
When $j=1$, let $\beta_1, \dots, \beta_k$ be properly embedded, mutually disjoint arcs in 
$F_{g,1} \setminus \{\gamma_1, \dots,  \gamma_k\}$ 
so that one of the endpoints of each $\beta_i$ lies on $\gamma_i$ and 
the other endpoint of $\beta_i$ lies on $B= \partial F_{g, 1}$. 
Since $F_{g,1} \setminus \{\gamma_1, \dots , \gamma_k\}$ is connected, 
one can choose those arcs 
$\beta_1, \dots, \beta_k \subset F_{g,1} \setminus \{\gamma_1, \dots, \gamma_k\}$ so that they are mutually disjoint. 
When $j=2$,  let $\beta_1, \dots,  \beta_k$ be properly embedded, mutually disjoint arcs in 
$F_{g,2} \setminus \{\gamma_1, \dots, \gamma_k\}$ 
so that one of the endpoints of each $\beta_i$ lies on $\gamma_i$ and 
the other endpoint of $\beta_i$ lies on $B$ (resp. $B'$) if $i = 1, \dots, k-1$ (resp. $i = k$).
In both cases $j= 1,2$, 
consider a small neighborhood $\mathcal{N}= \mathcal{N}(\gamma_i \cup \beta_i)$ in $F_{g,j} $. 
We set $\alpha_i $ to be a component  of $\partial \mathcal{N}  \setminus \partial F_{g,j}$ 
which is not parallel to $\gamma_i$. 
See Figures \ref{fig_fg1}(1), \ref{fig_fg2}(1). 
Then $\alpha_1, \dots, \alpha_k$ are mutually disjoint, essential arcs  in $F_{g,j}$.

 We claim that 
 the orbits of $\alpha_1, \dots, \alpha_k$ under $h_{g,j}$ are distinct. 
 Assume that 
 $O_{h_{g,j}}(\alpha_i) = O_{h_{g,j}}(\alpha_{i'})$ for some $i, i' \in \{1, \dots, k\}$ with $i \ne i'$.  
 This implies that 
 $O_{h_{g,j}}(\gamma_i) = O_{h_{g,j}}(\gamma_{i'})$, 
 since $\gamma_i$ is obtained from each $\alpha_i$ by concatenating with an arc of $B$ or $B'$.   
 This contradicts the choice of $\gamma_1, \dots, \gamma_k$.

Let us consider the closed surface $\Sigma_{2g+j-1}=DF_{g, j}$ of genus $2g+j-1$ 
that is obtained as the double $DF_{g, j}$ of $F_{g, j}$ along  $\partial F_ {g, j}$. 
There exists an involution 
$$\iota : \Sigma_{2g+j-1}\rightarrow \Sigma_{2g+j-1}$$ 
that interchanges the two copies of $F_{g, j}$ and $\iota|_ {\partial F_{g,j}}= \mathrm{id}$ holds. 
(Notice that $\iota$ is orientation reversing.)  
For the above essential arc $\alpha_i$, there
is a corresponding arc $\iota(\alpha_i)$ on the second copy of $F_{g,j}$  
so that $\widehat{\gamma}_i= \alpha_i \cup \iota(\alpha_i)$ becomes 
an essential simple closed curve in $\Sigma_{2g+j-1}$. 
Since $\alpha_1, \dots, \alpha_k$ are mutually disjoint, 
$\widehat{\gamma}_1, \dots, \widehat{\gamma}_k$ are mutually disjoint, essential simple closed curves in $\Sigma_{2g+j-1}$. 
See Figures \ref{fig_fg1}(2), \ref{fig_fg2}(2).

Let 
$$ \widehat{h}_{g,j} = h_{g,j}\# h_{g,j}^{-1} : \Sigma_{2g+j-1} \rightarrow  \Sigma_{2g+j-1} $$ 
be a homeomorphim induced by $h_{g,j}$. 
More precisely, 
$\widehat{h}_{g,j}(x) = h_{g,j}(x)$ if $x$ is in one copy of $F_{g,j}$ and 
$\widehat{h}_{g,j}(\iota (x)) = \iota (h^{-1}_{g,j}(x))$ if $\iota (x)$ is in the second copy of $F_{g,j}$. 

Note that  $\widehat{h}_{g,j}$ is a reducible homeomorphism,  
since $\widehat{h}_{g,j}$ preserves the essential simple closed curves $\partial F_{g, j} \subset \Sigma_{2g+j-1}$.

 \begin{center}
\begin{figure}[h]
\includegraphics[height=3cm]{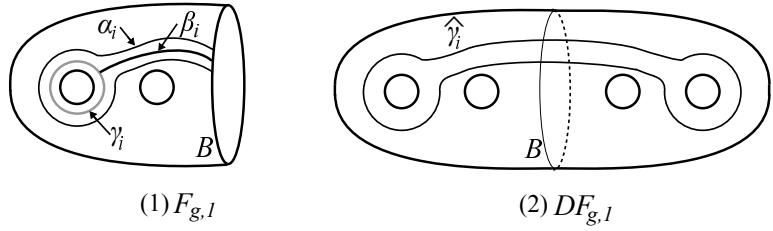}
\caption{Case $(g,j)= (2,1)$. 
(1) The arc $\alpha_i$ in $F_{g,1} (= \Sigma_{g,1})$. 
(2) The simple closed curve $\widehat{\gamma}_i$ in $\Sigma_{2g}= DF_{g,1}$.}
\label{fig_fg1}
\end{figure}
\end{center}

 \begin{center}
\begin{figure}[h]
\includegraphics[height=3cm]{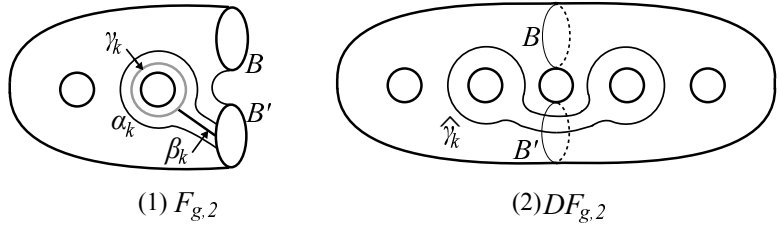}
\caption{
Case $(g,j)= (2,2)$. 
(1) The arc $\alpha_k$ in $F_ {g,2} (= \Sigma_{g,2})$. 
(2) The simple closed curve $\widehat{\gamma}_k$ in $\Sigma_{2g+1}= DF_{g,2}$.}
\label{fig_fg2}
\end{figure}
\end{center}

\noindent
{\bf Claim 1.}  
For $j \in \{1,2\}$, 
the orbits of $\widehat{\gamma}_1,\dots, \widehat{\gamma}_k$ 
under $\widehat{h}_{g,j}$  are distinct. 
\medskip
\\
Proof of Claim 1. 
By the definition of $\widehat{h}_{g,j}$, 
we have $\widehat{h}_{g,j}|_ {\partial F_{g,j}}= \mathrm{id}$, and 
the orbits of the arc $\alpha_i \subset \widehat{\gamma}_i$  
under $\widehat{h}_{g,j}$ are contained in one copy of $F_{g,j}$ 
for $i= 1, \dots, k$. 
Claim 1 follows, since the orbits of $\alpha_1, \dots, \alpha_k$ under $h_{g,j}$ are distinct.  
\medskip

\noindent
{\bf Claim 2.}  
For $j \in \{1,2\}$, 
the orbits of $\widehat{\gamma}_1,\dots, \widehat{\gamma}_g$ under $\widehat{h}_{g,j}$ fill $\Sigma_{2g+ j-1}$. 
\medskip
\\
Proof of Claim 2.  
We prove the claim when $j=2$. 
(The proof for the case of $j=1$ is similar.) 
By the proof of Lemma \ref{lem_curve}, 
a single orbit of $\gamma_1$  under $h_{g,2}$ fill $F_{g, 2}$. 
This means that there exists an integer $n>0$ such that 
each component of 
$ F_{g, 2} \setminus \{h_{g, 2}^{\ell} (\gamma_1)\ |\  \ell \in \{0, \pm 1, \dots, \pm n\} \}$ 
 is a disk or a once-holed disk. 
 Let $A$ (resp. $A'$) 
 be the annular component of 
 $ F_{g, 2} \setminus \{h_{g, 2}^{\ell} (\gamma_1)\ |\ \ell \in \{0, \pm 1, \dots, \pm n\} \}$ 
 such that one of the boundary components of $A$ (resp. $A'$) coincides with 
 $B \subset \partial F_{g,2}$ (resp. $B' \subset \partial F_{g,2}$). 
 Then the annulus $A$ 
 is cut into disks  by cutting $F_{g,2}$ along the arc $\alpha_1$ 
 (since $\partial \alpha_1$ lies on $B$).  
 The other annulus $A'$ 
 is also cut into disks by cutting $F_{g,2}$ along the arc $\alpha_k$ 
 (since $\partial \alpha_k$ lies on $B'$).  
 Thus 
 the surface obtained from the double $\Sigma_{2g+1}= DF_{g,2}$ 
 by cutting along all $\widehat{h}_{g,2}^{\ell} (\widehat{\gamma}_1)$ and 
 $\widehat{h}_{g,2}^{\ell} (\widehat{\gamma}_k)$ ($\ell \in \{0, \pm 1, \dots, \pm n\}$)  
 is a disjoint union of disks. 
 This means that the orbits of $ \widehat{\gamma}_1,  \widehat{\gamma}_k$ under $\widehat{h}_{g,2}$  fill $\Sigma_{2g+1}$. 
 Thus the orbits of $ \widehat{\gamma}_1, \dots,  \widehat{\gamma}_k$ under $\widehat{h}_{g,2}$  fill $\Sigma_{2g+1}$. 
 This completes the proof of Claim 2. 
 \medskip

We build the mapping torus 
$$T_{\widehat{h}_{g,j}}= \Sigma_{2g+j-1}\times [-1, 1] /_{(x,1)\sim ({{\widehat{h}_{g,j}}(x),-1)}}.$$ 

\noindent
{\bf Claim 3.} 
For $j \in \{1,2\}$, 
the mapping torus  $T_{\widehat{h}_{g,j}}$ is a $2$-fold branched cover of  $M$
 branched over a $2j$-component link. 
 \medskip
 
 \noindent
 Proof of Claim 3. 
 The statement of Claim 3 follows from the proof of \cite[Lemma 1]{Mon1}. 
Here we prove the claim for completeness. 
Consider the involution 
$u:  T_{\widehat{h}_{g,j}}\rightarrow  T_{\widehat{h}_{g,j}}$ defined by 
 $$u(x, t)=(\iota(x), -t) \ { \rm{for }} \  (x,t)\in \Sigma_{2g+j-1}\times [-1, 1], $$ 
 where $\iota$  is the previous involution on $\Sigma_{2g+j-1}$. 
In the case $j=1$, 
$u$ fixes $2 = 2j$ curves 
$B \times \{1\} (= B \times \{-1\})$ and $B \times \{0\}$. 
In the case $j=2$, 
$u$ fixes $4= 2j$ curves $B \times \{1\}$, $B \times \{0\}$ and $B' \times \{1\}$, $B' \times \{0\}$.

The quotient of $T_{\widehat{h}_{g,j}}$ by the action of $u$ is the mapping torus $T_{h_{g, j}}$ of $h_{g, j}$ under the identification
$(y,t)\sim (y, -t)$ for all $(y,t) \in \partial T_{h_{g,j}}= \partial F_{g, j}\times [-1, 1]/_{(y,1)\sim (y, -1)}$. 
This is equivalent to identifying $\{y\} \times [-1, 0]$ with  $\{y\} \times [0, 1]$ for all  $y  \in \partial F_{g, j}$. 
The resulting quotient is homeomorphic to the manifold obtained from $T_{h_{g, j}}$ 
by collapsing the set $\{y\}\times S^1$ to a point for all $y\in \partial F_{g, j}$.  
Thus the quotient of $ T_{\widehat{h}_{g,j}}$ under $u$ is the relative mapping torus of $h_{g,j}$  which is
the open book decomposition $(F_{g, j}, h_{g,j})$ of $M$. 
In other words, 
the mapping torus 
$T_{\widehat{h}_{g, j}}$ of $\widehat{h}_{g, j}$ is a $2$-fold branched cover of $M$  branched cover the $2j$-component link that comes from 
 the above $2j$ curves fixed by $u$.   

This completes the proof of Claim 3. 
\medskip

By Claims 1 and 2, 
one can apply Theorem \ref{hyperbolic} to 
 the orbits of $\widehat{\gamma}_1, \dots, \widehat{\gamma}_k$ under $\widehat{h}_{g,j}$. 
 Then we have  $n \in {\Bbb N}$ given by Theorem \ref{hyperbolic}. 
 For ${\bm n}=(n_1, \dots, n_k)\in {\Z}^k$, 
 we set 
 $$f_{{\bm n}}= \tau_{\widehat{\gamma}_k}^{n_k} \dots \tau_{\widehat{\gamma}_1}^{n_1} \widehat{h}_{g,j} \hspace{5mm} \mbox{and} \hspace{5mm} 
 N_{{\bm n}} = T_{f_{{\bm n}}}.$$

By Theorem \ref{hyperbolic}(a), 
if $|n_i| \ge n$ for $i= 1, \dots, k$, 
then $f_{{\bm n}}$ is pseudo-Anosov and $N_{{\bm n}}$ is a hyperbolic $3$-manifold 
which is a $\Sigma_{2g+ j-1}$-bundle over $S^1$. 
Thus $N_{{\bm n}}$ has a property of Theorem  \ref{adding_arc}(a). 
By Theorem \ref{hyperbolic}(b),  $N_{{\bm n}}$ also has a property of Theorem  \ref{adding_arc}(c).

\begin{center}
\begin{figure}[t]
\includegraphics[height=10cm]{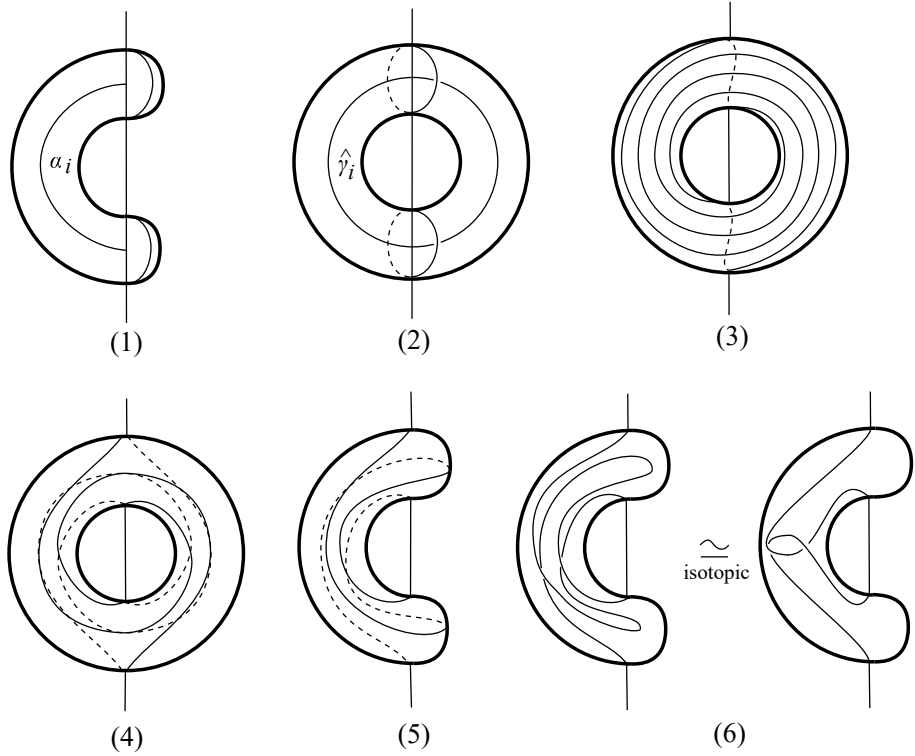}
\caption{(1) The regular neighborhood of $\alpha_i \times \{0\}$ in $M$, which is a 3-ball. 
(2) The regular neighborhood of $\widehat{\gamma}_i \times \{0\}$ in $T_{\widehat{h}_{g,j}}$,
which is a solid torus. 
(3)  In order to obtain $T_{\tau_{\widehat{\gamma}_i}^{2}\widehat{h}_{g,j}}$, we remove 
 the solid torus from $T_{\widehat{h}_{g,j}}$ and glue
this solid torus again so that the boundary of its meridian is identified with
one of the circles on the torus. 
(4) Isotope the circles to be invariant by the elliptic involution. 
(5) Take a quotient by the elliptic involution. 
(6) Push the arcs into the 3-ball, then we have the branched set of the new $2$-fold branched cover of $M$ after the Dehn surgery.} 

\label{fig_figure-claim4}
\end{figure}
\end{center}

\noindent
{\bf Claim 4.} 
For $j \in \{1,2\}$ and ${\bm n}=(n_1,\dots, n_k) \in {\Bbb  Z}^k$, 
the mapping torus $N_{\bm n}$ of $f_{{\bm n}}= \tau_{\widehat{\gamma}_k}^{n_k} \dots \tau_{\widehat{\gamma}_1}^{n_1} \widehat{h}_{g,j}$ 
 is a 2-fold branched cover of $M$ branched over a $2j$-component link.
\medskip

\noindent 
Proof of Claim 4.  
We use Montesinos' trick \cite{Mon2,Mon1}. 
(cf.  Auckly \cite[Example 1]{Auckly}.) 
See Figure \ref{fig_figure-claim4}, which illustrates Montesinos' trick. 
Since $\widehat{\gamma}_1, \dots, \widehat{\gamma}_k$ are mutually disjoint simple closed curves in $\Sigma_ {2g+ j-1}$, 
the curves 
$\delta_i^* = \widehat{\gamma}_i \times \{0\} \subset \Sigma_{2g+ j-1} \times \{0\}$  
for $i= 1, \dots, k$ are mutually disjoint. 
We consider the link 
$L^*_k = \delta_1^* \cup \dots \cup \delta_k^*$ in $T_{\widehat{h}_{g,j}}$. 
Then 
the $3$-manifold $N_{\bm n}$ is obtained from the mapping torus $T_{\widehat{h}_{g,j}}$ of $\widehat{h}_{g,j}$ 
by the Dehn surgery along the link $L^*_k $.

Notice that 
each $\delta_i^*$  is invariant under the involution 
$u:T_{\widehat{h}_{g,j}}\rightarrow  T_{\widehat{h}_{g,j}}$.   
Now  we do Dehn surgery along  $L^{*}_k$. 
For $i=1, \dots, k$, we remove the interior of a  neighborhood $N_i$ of $\delta_{i}^{*}$, and 
replace it with  a new solid torus $V_i$. 
The involution $u_i:= u|_{\partial N_i}: \partial N_i  \rightarrow \partial N_i$ 
extends to an  elliptic involution on the solid torus $V_i$ added with Dehn filling.  
The effect of the Dehn surgery on the quotient by the elliptic involution on $V_i$ 
 is a modification of the $3$-manifold $M$ inside a collection of 3-balls that changes the branched set 
 for the $2$-fold branched cover $T_{\widehat{h}_{g,j}} \rightarrow M$,  
but not the ambient manifold $M$. 
Thus $N_{\bm n}$ is still a $2$-fold branched covers of $M$ 
branched over a link with $2j$ components. 
This completes the proof of Claim 3. 
\medskip

By Claim 4, the manifold $N_{{\bm n}}$ satisfies a property of Theorem  \ref{adding_arc}(b), and 
we have finished the proof of Theorem \ref{adding_arc}. 
\end{proof}

\section{Large volume vs. fixed genus}
In this section we discuss conditions on 3-manifolds under which Question \ref{question} has a positive answer. 

\begin{thm}\label{criterion}
Let $M$ be a closed, connected, oriented $3$-manifold
containing a hyperbolic fibered knot of genus $g_0 \ge 2$. 
Suppose that for any $g\geq g_0$ and $j \in \{1,2\}$, 
$M$ contains a family $\{ K^{j}_r(g)\}_{r \in {\Bbb N}}$ of hyperbolic fibered links of genus $g$ with $j$ components 
such that $\mathrm{vol}(M\setminus K^{j}_r(g))\to \infty$ as $r\to \infty.$ 
Then Question \ref{question} has a positive answer for $M$.
\end{thm}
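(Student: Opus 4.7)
The overall plan is to apply the construction of Theorem \ref{adding_arc} to each fibered link $K^j_r(g)$ produced by the hypothesis, obtaining for each $r$ a pseudo-Anosov element in $\mathcal{D}_{2g+j-1}(M)$, and then to use the assumption $\vol(M \setminus K^j_r(g)) \to \infty$ together with simplicial volume (Gromov norm) estimates to propagate the large volume across the construction.

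Fix $g \geq g_0$ and $j \in \{1,2\}$. For each $r$, the hyperbolic fibered link $K^j_r(g)$ determines an open book decomposition $(F_{g,j}, h^{(r)}_{g,j})$ of $M$ with pseudo-Anosov monodromy. Running the proof of Theorem \ref{adding_arc} with $k \geq 2$, I obtain essential simple closed curves $\widehat{\gamma}_1, \ldots, \widehat{\gamma}_k \subset \Sigma_{2g+j-1} = DF_{g,j}$ whose orbits under the doubled monodromy $\widehat{h}^{(r)}_{g,j}$ are distinct and fill. Let $L^*_k = \bigcup_i \widehat{\gamma}_i \times \{0\}$ and $N^{(r)} := T_{\widehat{h}^{(r)}_{g,j}} \setminus L^*_k$. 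By Theorem \ref{hyperbolic}, $N^{(r)}$ is hyperbolic; Theorem \ref{hyperbolic}(b) produces Dehn fillings $T_{f^{(r)}_{\bm n}}$ of $N^{(r)}$ whose volumes approach $V_r := \vol(N^{(r)})$ from below, and by Claim 4 in the proof of Theorem \ref{adding_arc}, the maps $f^{(r)}_{\bm n}$ lie in $\mathcal{D}_{2g+j-1}(M)$.

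The crux is to prove $V_r \to \infty$. Since $\widehat{h}^{(r)}_{g,j}$ fixes $\partial F_{g,j} \subset \Sigma_{2g+j-1}$ pointwise, the $j$ tori $\partial F_{g,j} \times S^1 \subset T_{\widehat{h}^{(r)}_{g,j}}$ are incompressible (as $\partial F_{g,j}$ is essential in $\Sigma_{2g+j-1}$) and decompose $T_{\widehat{h}^{(r)}_{g,j}}$ into two copies of $T_{h^{(r)}_{g,j}} \cong M \setminus \mathcal{N}(K^j_r(g))$, each hyperbolic by hypothesis. By additivity of simplicial volume along essential tori (Thurston--Gromov) and Gromov's proportionality principle,
\[
\|T_{\widehat{h}^{(r)}_{g,j}}\| \;=\; 2\,\|M \setminus K^j_r(g)\| \;=\; \frac{2\,\vol(M \setminus K^j_r(g))}{v_3}.
\]
Dehn filling does not increase simplicial volume, and $T_{\widehat{h}^{(r)}_{g,j}}$ is recovered from $N^{(r)}$ by filling the meridians of $L^*_k$, so $\|N^{(r)}\| \geq \|T_{\widehat{h}^{(r)}_{g,j}}\|$. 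Since $N^{(r)}$ is hyperbolic, Gromov's equality gives
\[
V_r \;=\; v_3\,\|N^{(r)}\| \;\geq\; 2\,\vol(M \setminus K^j_r(g)) \;\longrightarrow\; \infty.
\]
For each $r$, pick $\bm n_r$ so that $\vol(T_{f^{(r)}_{\bm n_r}}) \geq V_r/2$; then $\{\phi_r := f^{(r)}_{\bm n_r}\}_r$ is an infinite family of pseudo-Anosov elements of $\mathcal{D}_{2g+j-1}(M)$ whose mapping tori have unbounded volumes, answering Question \ref{question} affirmatively for $M$. The main obstacle is the simplicial-volume step: one must carefully verify incompressibility of $\partial F_{g,j} \times S^1$ in $T_{\widehat{h}^{(r)}_{g,j}}$, identify the two JSJ pieces with the hyperbolic exterior $M \setminus \mathcal{N}(K^j_r(g))$, and correctly invoke both additivity of simplicial volume under essential-torus gluings and its monotonicity under Dehn filling.
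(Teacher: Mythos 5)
Your proposal is correct and follows essentially the same route as the paper: double the open book to get $T_{\widehat{h}^{(r)}_{g,j}}$, observe that it contains an incompressible torus cutting it into two copies of $M \setminus \mathcal{N}(K^j_r(g))$, and use Gromov norm monotonicity under Dehn filling together with Thurston's proportionality to show $\vol(N^{(r)}) \to \infty$, then select fillings capturing at least half of that volume. The only cosmetic difference is that you invoke full additivity of simplicial volume along essential tori to obtain the factor of $2$, whereas the paper is content with the one-sided inequality $v_3\|T_{\widehat{h}_r}\| \geq \vol(M\setminus K_r)$ from Thurston's notes, which already suffices.
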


\begin{proof} For notational simplicity we will assume that $j=1$. 
The case $j=2$ is analogous.
Fix  $g\geq g_0$, we consider the family of the hyperbolic knots 
$\{K_r:= K^{1}_r(g)\}_{r \in {\Bbb N}}$ satisfying the assumption of Theorem \ref{criterion},  
where  $F_ {r}= \Sigma_ {g,1}$
denotes the fiber of $K_r$ and $h_r:= h_{g,1}$ the monodromy.
As in the proof of Theorem \ref{adding_arc}, for any $r \in {\Bbb N}$, 
we build the 2-fold branched cover
$T_{\widehat{h}_{r}}$  of  $M$
with fiber
$\Sigma_{2g}=DF_{r}$ and monordomy
$ \widehat{h}_{r} = h_{r}\# h_{r}^{-1}$.
 We apply the process in the proof of Claims 1--4 of the proof of Theorem \ref{adding_arc}: 
 For  $1<k \leq g$, say for $k=2$,
 we take simple closed curves
 $\widehat{\gamma^r}_1,\widehat{\gamma^r}_2$ on $\Sigma_ {2g}$ 
 so that Theorem \ref{hyperbolic} can be applied. 
 By pushing  $\widehat{\gamma^r}_1$, $\widehat{\gamma^r}_2$, 
 we get a hyperbolic link $L^r$  in $T_{\widehat{h}_{r}}$.  Recall that for  ${\bm n}=(n_1,n_2) \in {\Bbb  Z}^2$ with $n_1,n_2$ large enough, 
 the mapping class 
 $f^r_{{\bm n}}:= \tau_{\widehat{\gamma^r}_2}^{n_2}  \tau_{\widehat{\gamma^r}_1}^{n_1} \widehat{h}_{r} $ 
 defined on $\Sigma_ {2g}$ 
 is pseudo-Anosov and its mapping torus  $N^r_{\bm n}$ is hyperbolic. 
 Furthermore, $N^r_{\bm n}$ is obtained by Dehn filling of 
 $\overline{T_{\widehat{h}_{r}}\setminus  \mathcal{N}(L^r)}$, 
 and we have
\begin{equation} 
\label{eq:variation3}
 \frac{1}{2}\ \vol(T_{\widehat{h}_{r}}\setminus L^r)< \vol(N^r_{\bm n}),
\end{equation}
where \eqref{eq:variation3} follows from \eqref{eq:variation2}.
To finish the proof of the theorem, we need to show that $\vol(N^r_{\bm n})\to \infty$ as $r\to \infty$.
This follows immediately from \eqref{eq:variation3} and the following. 
 \medskip
 
\noindent
{\bf Claim 1.}  
We have $\vol(T_{\widehat{h}_{r}}\setminus L^r)\to \infty$ as $r\to \infty$.
 \medskip

\noindent
Proof of Claim 1. For any orientable 3-manifold
$X$ with $\partial X$ empty or $\partial X$ consisting only of tori, 
let $||X||$ denote  the Gromov norm of $X$. See 
\cite[Definition 6.1.2, the beginning of Section 6.5]{thurston:notes}.
If $X$ is closed and hyperbolic, or if $\partial X$ consists only of tori and the interior of $X$ is hyperbolic, then
$v_3  ||X||= \vol(X)$, where $v_3$ is the volume of the ideal regular tetrahedron. 
 (See \cite[Theorem 6.2, Lemma 6.5.4]{thurston:notes}.) 
By construction, $M\setminus K_r \simeq M \setminus \mathcal{N}(K_r)$  
is a submanifold of $T_{\widehat{h}_{r}}$ and  $\partial(M\setminus K_r)$ is an incompressible torus in $T_{\widehat{h}_{r}}$. Indeed, we can think of $T_{\widehat{h}_{r}}$ as obtained by identifying two copies of $M\setminus K_r$ along their torus boundary.
By \cite[Theorem 6.5.5]{thurston:notes}, 
we have
$$v_3 \ ||T_{\widehat{h}_{r}}||\geq  \mathrm{vol}(M\setminus K_r),$$
which implies that 
$||T_{\widehat{h}_{r}}||\to \infty$ as $r\to \infty.$  
Since $T_{\widehat{h}_{r}}$ is obtained from $T_{\widehat{h}_{r}} \setminus L^r$ by adding solid tori, by 
\cite[Proposition 6.5.2]{thurston:notes} we obtain
$$ \vol(T_{\widehat{h}_{r}}\setminus L^r)=v_3\  ||T_{\widehat{h}_{r}}\setminus L^r ||\geq v_3\   ||T_{\widehat{h}_{r}}||,$$
and $ \vol(T_{\widehat{h}_{r}}\setminus L^r) \to \infty$ as $r\to \infty.$
\end{proof}

For $M=S^3$, families of knots satisfying the assumption of Theorem \ref{criterion} are constructed 
in Futer-Purcell-Schleimer \cite[Theorem 1]{FPS22}. 
Thus we have the following result. 

\begin{cor} \label{positive} 
For any $g\geq 2$, 
the set  $\mathcal{D}_{2g}(S^3)$ contains an infinite family of pseudo-Anosov elements whose mapping tori have arbitrarily large volume.
\end{cor}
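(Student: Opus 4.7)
The plan is to deduce Corollary \ref{positive} directly from Theorem \ref{criterion} applied to $M = S^3$, by verifying the hypothesis of that theorem in the case $j = 1$ via the construction of Futer--Purcell--Schleimer \cite{FPS22}. The corollary concerns the set $\mathcal{D}_{2g}(S^3)$ of mapping classes on $\Sigma_{2g}$, i.e.\ closed surface bundles of \emph{even} fiber genus $2g$. Inspecting the construction in the proof of Theorem \ref{criterion}, the resulting fiber surface is $\Sigma_{2g+j-1}$, so the case $j = 1$ produces precisely the even-genus bundles needed here, while $j = 2$ yields odd fiber genus. Hence only the $j = 1$ half of Theorem \ref{criterion} is required for the corollary, and there is no need to worry about producing genus-$g$, two-component fibered links in $S^3$ with unbounded drilled volume.

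First, I would fix $g \geq 2$, set $g_0 = 2$, and appeal to \cite[Theorem 1]{FPS22} to obtain an infinite family $\{K^1_r(g)\}_{r \in \NN}$ of hyperbolic fibered knots of genus $g$ in $S^3$ with $\vol(S^3 \setminus K^1_r(g)) \to \infty$. Existence of a starting hyperbolic fibered knot in $S^3$ of genus $g_0 = 2$ is standard (or can be arranged by stabilization of any hyperbolic fibered knot in $S^3$, as explained in the paragraph following Theorem \ref{thm_main}), so the hypothesis of Theorem \ref{criterion} is satisfied for $M = S^3$ in the case $j = 1$.

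Next, I would run the construction in the proof of Theorem \ref{criterion} with this family. For each $r$ one forms the $2$-fold branched cover $T_{\widehat{h}_r}$ of $S^3$ with fiber $\Sigma_{2g} = DF_r$ and monodromy $\widehat{h}_r = h_r \# h_r^{-1}$, selects simple closed curves $\widehat{\gamma^r}_1, \widehat{\gamma^r}_2$ as dictated by Lemma \ref{lem_curve}, and then, for any $\bm n = (n_1, n_2)$ with $|n_1|,|n_2|$ sufficiently large (depending on $r$), obtains a pseudo-Anosov $f^r_{\bm n} = \tau^{n_2}_{\widehat{\gamma^r}_2}\tau^{n_1}_{\widehat{\gamma^r}_1}\widehat{h}_r$ on $\Sigma_{2g}$ whose mapping torus $N^r_{\bm n}$ is a hyperbolic $2$-fold branched cover of $S^3$. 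In particular $f^r_{\bm n} \in \mathcal{D}_{2g}(S^3)$. Claim 1 in the proof of Theorem \ref{criterion}, combined with the inequality \eqref{eq:variation3}, then gives $\vol(N^r_{\bm n}) \to \infty$ as $r \to \infty$, producing an infinite subfamily with strictly unbounded volume.

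The main obstacle is essentially a bookkeeping/citation one: confirming that \cite[Theorem 1]{FPS22} indeed supplies, for every genus $g \geq 2$, a family of hyperbolic fibered \emph{knots} in $S^3$ (not merely links) with $\vol(S^3 \setminus K) \to \infty$, and pinning down a genus-$g_0 = 2$ seed knot so the hypotheses of Theorem \ref{criterion} apply cleanly. Once this is in hand, the corollary follows immediately as the $j = 1$ specialization of Theorem \ref{criterion}.
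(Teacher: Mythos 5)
Your proposal is correct and follows essentially the same route as the paper, which proves Corollary~\ref{positive} simply by citing \cite[Theorem 1]{FPS22} as supplying the families of fibered knots needed and then invoking Theorem~\ref{criterion}. Your explicit observation that only the $j=1$ half of the hypothesis of Theorem~\ref{criterion} is needed (since $j=1$ yields fiber genus $2g+j-1=2g$, the even case the corollary asserts) is a clarification the paper leaves implicit, and it correctly explains why the corollary is stated only for $\mathcal{D}_{2g}(S^3)$ rather than for all sufficiently large genera.
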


\subsection*{Acknowledgement} This work started from  a discussion of the authors during the conference 
``Classical and quantum $3$-manifold topology" held at Monash University (Melbourne, Australia)
in December of 2018. We thank the organizers (D. Futer, S. Garoufalidis, C. Hodgson, J. Purcell,  H. Rubinstein, S. Schleimer, P. Wedrich) for inviting us to participate and for ensuring  excellent working conditions during the conference. 
We thank M. Sakuma for explaining his work \cite{KS22} and for pointing out a relation between our results and \cite[Question 9.7]{KS22}. 
We thank Y. Koda for helpful comments. 
We thank D. Futer for explaining his work \cite{FPS22} with J. S. Purcell and S. Schleimer.

\bibliographystyle{hamsplain}
\bibliography{biblio}

\end{document}